\newif\ifarxiv
\newcommand{\figwidth}{0.5\linewidth}
\newcommand{\sidebysidefigwidth}{0.49\linewidth}
\def\BibTeX{{\rm B\kern-.05em{\sc i\kern-.025em b}\kern-.08em
    T\kern-.1667em\lower.7ex\hbox{E}\kern-.125emX}}
\newtheorem{corollary}{Corollary}
\newtheorem{definition}{Definition}
\newtheorem{proposition}{Proposition}
\def\qed{\unskip\kern10pt{\unitlength1pt\linethickness{.4pt}\framebox(6,6){}}}
\newenvironment{proof}{{\it Proof:\/}}{\hfill\qed}
\newtheorem{lemma}{Lemma}
\newtheorem{theorem}{Theorem}
\newcommand{\gi}{I}
\newcommand{\gilep}{\lep{I}}
\newcommand{\girep}{\rep{I}}
\newcommand{\fenclosure}{F}
\newtheorem{lemma}{Lemma}[section]
\newtheorem{theorem}{Theorem}[section]
\newcommand{\gi}{\z}  
\newcommand{\gilep}{\lep{z}}
\newcommand{\girep}{\rep{z}}
\newcommand{\fenclosure}{\tilde{\f}}
\newcommand{\email}[1]{\\ \small{\url{#1}} \\}
\newcommand{\institution}[1]{\\ \parbox{3.0in}{\small{#1}}}
\newcommand{\keywords}[1]{\small\textbf{Keywords: }#1}
\newcommand{\AMSsubj}[1]{\noindent\textbf{AMS subject classifications: }#1}
\newcommand\whenaccepted{Submitted: (insert date);
                         Revised: (insert date);
                         Accepted:(insert date).}
\renewcommand{\dd}{\mathrm{d}}
\newcommand{\eqdef}{\triangleq}
\newcommand{\ignore}[1]{}
\newcommand{\integers}{\mathbb{Z}}
\newcommand{\paren}[1]{\left(#1\right)}
\newcommand{\reals}{\mathbb{R}}
\renewcommand{\set}[1]{\left\{#1\right\}}
\newcommand{\lep}[1]{\underline{#1}}
\newcommand{\Ias}{\tilde \gi}
\newcommand{\Idb}{\gi^{\mathrm{Lagrange}}}
\newcommand{\intervals}{\mathbb{IR}}
\newcommand{\Istar}{\gi^*}
\newcommand{\Istarlep}{\gilep^*}
\newcommand{\Istarrep}{\girep^*}
\newcommand{\rep}[1]{\overline{#1}}
\newcommand{\weight}{\mu}
\newenvironment{customlemma}[1]
  {\innercustomlemma}
  {\endinnercustomlemma}
\newenvironment{customproposition}[1]
  {\innercustomproposition}
  {\endinnercustomproposition}
\newenvironment{customthm}[1]
  {\innercustomthm}
  {\endinnercustomthm}
\title{
Sharp Taylor Polynomial Enclosures\\ in One Dimension\footnote{This work first appeared as chapter 2 of \small{\url{https://arxiv.org/pdf/2212.11429v1.pdf}}.}
}
\author{Matthew Streeter and Joshua V. Dillon\\ 
\vspace{-.2cm} \\
\small{{\tt \href{mailto:mstreeter@google.com}{mstreeter@google.com}, \href{mailto:jvdillon@google.com}{jvdillon@google.com}}}}
\date{}
\title{Sharp Taylor Polynomial Enclosures\\ in One Dimension\footnote{\whenaccepted}}
\author{Matthew Streeter and Joshua V. Dillon
\institution{Google Research, Mountain View, CA 94043, USA}
\\ \small{\url{mstreeter@google.com}, \url{jvdillon@google.com}} \\ }
\date{}
\begin{document}

\maketitle

\begin{abstract}
It is often useful to have polynomial upper or lower bounds on a one-dimensional function that are valid over a finite interval, called a \emph{trust region}.  A classical way to produce polynomial bounds of degree $k$ involves bounding the range of the $k$th derivative over the trust region, but this produces suboptimal bounds.  We improve on this by deriving \emph{sharp} polynomial upper and lower bounds for a wide variety of one-dimensional functions.  We further show that sharp bounds of degree $k$ are at least $k+1$ times tighter than those produced by the classical method, asymptotically as the width of the trust region approaches zero.
We discuss how these sharp bounds can be used in majorization-minimization optimization, among other applications.
\end{abstract}
\ifarxiv
\else
\keywords{Taylor polynomials, interval analysis}

\AMSsubj{41A10}
\fi

\section{Introduction} \label {sec:intro}

Taylor polynomials are among the most useful tools in science and engineering.  However, their usefulness is limited by the fact that they provide only a local approximation to a function, with no guarantees about the approximation's accuracy.  In many applications, it is preferable to derive polynomial upper and lower bounds that are valid over a small \emph{trust region} \cite{berz1999new,makino1996remainder,makino1998rigorous}.  
We will see that such bounds can be obtained by modifying the maximum-degree coefficient of the Taylor polynomial.  However, prior to this work, it was not known how to compute optimal bounds even for rather simple functions.

In this paper we will develop theory that lets us compute optimal \emph{Taylor polynomial enclosures}.  Like Taylor polynomials, a Taylor polynomial enclosure is centered at a point $x_0$, and is tight at this point.  However, rather than approximating a function $f$, a Taylor polynomial enclosure provides upper and lower bounds that are valid over some trust region $[a, b]$.  A \emph{sharp} Taylor polynomial enclosure provides the tightest possible bounds.  Figure~\ref{fig:example_enclosure} illustrates the sharp quadratic Taylor polynomial enclosure for the function $f(x) = \frac 3 2 \exp(3 x) - 25 x^2$, at $x_0 = \frac 1 2$, for the trust region $[0, 1]$.

\begin{figure}[h]
\begin{center}
\includegraphics[width=\figwidth]{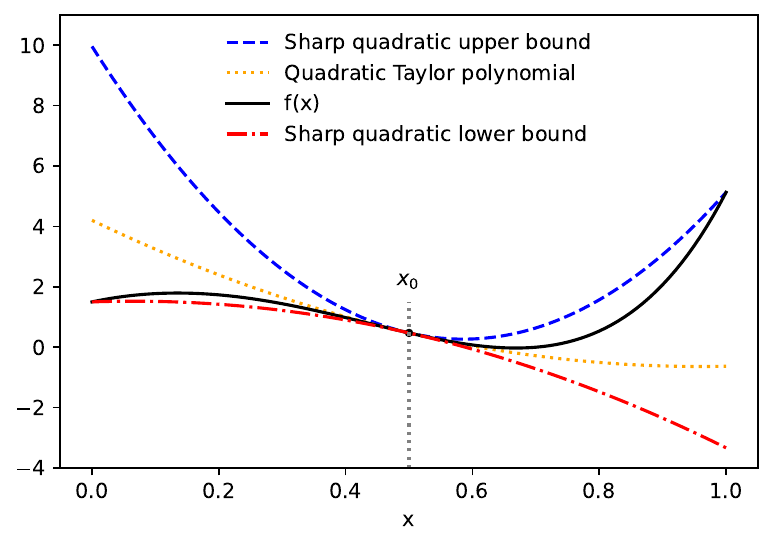}
\caption{Sharp quadratic upper and lower bounds for the function $f(x) = \frac 3 2 \exp(3 x) - 25 x^2$, centered at $x_0 = \frac 1 2$, and valid over the interval $[0, 1]$.}
\label{fig:example_enclosure}
\end{center}
\end{figure}

Observe that the upper bound in Figure~\ref{fig:example_enclosure} is tight at two points: $x = x_0$, and $x = 1$, the maximum value in the trust region.  Likewise the lower bound is tight at $x = x_0$ and at $x = 0$, the minimum value in the trust region.  Having the lower and upper bounds be tight at two (or more) points is a general feature of sharp quadratic bounds \cite{van2005algorithms}. 

To further illustrate the idea, suppose we are interested in the behavior of the $\exp$ function around the point $x_0 = \frac 1 2$.  Using the second degree Taylor polynomial, we may obtain the approximation
\begin{equation}
  \exp(x) \approx \sqrt{e} + \sqrt{e} \paren { x - \frac 1 2 } + 0.82436 \paren { x - \frac 1 2 }^2 \quad \mbox { for } x \approx \frac 1 2 .
\end{equation}
The approximation is tight at $x = \frac 1 2$, but its error grows exponentially for $x \gg x_0$.

Because $\exp(x)$ grows faster than any polynomial, no polynomial upper bound can hold for all $x \in \reals$.  However, if we only require the bounds to hold for $x$ belonging to some finite interval, the theory developed in this paper will allow us to compute the tightest polynomial bounds that are valid over that interval.  For example, we will be able to show
\begin{equation} \label{eq:exp_sharp_quadratic}
  \exp(x) \in \sqrt{e} + \sqrt{e} \paren { x - \frac 1 2 } + \left [ 0.70255, 1.4522 \right ] \paren { x - \frac 1 2 }^2 \quad \mbox { for } x \in [0, 2].\footnote{Recall that the product of an interval $\gi = [\gilep, \girep]$, and a scalar $\alpha$ is defined as $\gi \alpha \eqdef \set{z \alpha: z \in \gi} = [ \min \set { \gilep \alpha, \girep \alpha} , \max \set { \gilep \alpha, \girep \alpha} ]$.}
\end{equation}

The expression on the right hand side of \eqref{eq:exp_sharp_quadratic} defines a Taylor polynomial enclosure, which is similar to a Taylor polynomial except that the maximum-degree coefficient is an interval rather than a scalar.  The enclosure is called \emph{sharp} if this interval is as narrow as possible, as is the case in \eqref{eq:exp_sharp_quadratic}.  A Taylor polynomial enclosure can be thought of as a function that returns an interval, or equivalently, as a pair of real-valued functions, one of which returns a lower bound and one of which returns an upper bound.

\subsection{Contributions}

The primary contributions of this work are as follows:
\begin{itemize}
  \item In \S\ref{sec:sharp}, we derive \emph{sharp} Taylor polynomial enclosures of arbitrary degree $k$ for functions whose $k$th derivative is monotonically increasing or decreasing (such as $\exp$ or $\log$).  We also derive sharp \emph{quadratic} Taylor polynomial enclosures for functions whose Hessian is even-symmetric (such as $\mathrm{softplus}$, $\mathrm{relu}$, and a number of other commonly-used neural network activation functions). 
   \item In \S\ref{sec:comparison} we show that sharp Taylor polynomial enclosures of degree $k$ offer at least a factor $k+1$ improvement over a classical baseline \cite{jaulin2001interval,hansen1979global,hansen2003global,moore1966interval},
asymptotically as the width of the trust region approaches zero.  The classical baseline obtains (non-sharp) Taylor polynomial enclosures using the range of the $k$th derivative, as discussed further in \S\ref{sec:spoly_baseline}.
   \item In \S\ref{sec:arbitrary} we present several methods for computing Taylor polynomial enclosures for \emph{arbitrary} one-dimensional functions.  The enclosures produced by these methods are not sharp in general, but become sharp \emph{asymptotically} as the width of the trust region approaches zero.  
\end{itemize}

\subsection{Applications} \label {sec:applications}

The motivation for this work was to derive tighter majorizers for use in majorization-minimization (MM) optimization \cite{de2016block,lange2016mm}.  MM is a class of optimization methods that iteratively reduce a loss by minimizing a locally-tight upper bound on the loss, called a \emph{majorizer}.  On iteration $t$, an MM optimizer minimizes a majorizer that is tight at some point $x_t$ in order to obtain a point $x_{t+1}$ with lower (or in trivial cases, equal) loss, as illustrated in Figure~\ref{fig:mm}.  This process can be repeated until a local minimum is reached.

\begin{figure}[h]
\begin{center}
\includegraphics[width=\figwidth]{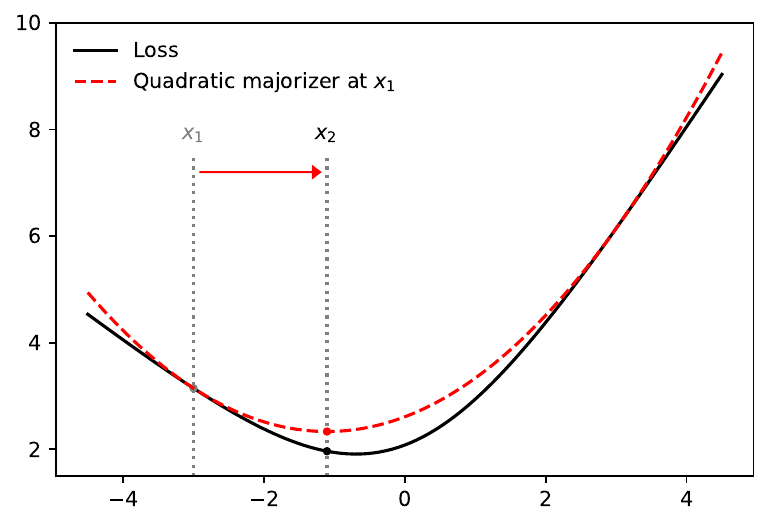}
\caption{Majorization-minimization (MM) optimization.  Starting at a point $x_1$, an MM optimizer computes an upper bound (majorizer) that is tight at $x_1$, then minimizes the upper bound to obtain a point $x_2$ with lower loss.  The process can be repeated until a local minimum is reached.}
\label{fig:mm}
\end{center}
\end{figure}

By design, the upper bound provided by a Taylor polynomial enclosure is a majorizer, and our theory therefore provides the tightest possible majorizers (of a given polynomial degree) for certain one-dimensional functions.  At first glance, this may not seem to be a practically useful result given that our theory does not apply directly to most losses that arise in practice.  But in fact, our work can serve as the basis of a recursive algorithm that computes majorizers for much more complex losses.  In concurrent work \cite{streeter2023automatically}, we define such an algorithm and use it to automatically derive MM optimizers for training deep neural networks.  This yields neural network optimizers that converge to a local minimum from any starting point, without the need for time-consuming tuning of learning rate hyperparameters.

Taylor polynomial enclosures can also be applied to problems to which \emph{Taylor models} \cite{makino1996remainder} have been previously applied, such as verified numerical integration \cite{berz1999new} and verified global optimization \cite{makino2005verified}.  We also believe our sharp Taylor polynomial enclosures could be used to derive tighter Taylor models, but defer exploration of this possibility to future work.

In addition to these practical possibilities, we see our work as providing a large number of mathematical inequalities of the type that are often derived by hand for use in proofs.  For example, for $x \in [-1, 1]$, it implies $1 + x + \frac 1 2 x^2 \le \exp(1 + x) \le 1 + x + \frac{e^2 - 3}{4} x^2$.  Additional examples are given in Table~\ref{tab:spoly_monotone_corollaries}.

\section{Related Work} \label{sec:spoly_related}

Deriving polynomial upper and lower bounds on functions has been the subject of much research.  A classical approach, discussed in detail in \S\ref{sec:spoly_baseline}, uses interval arithmetic to obtain bounds on the range of the $k$th derivative of $f$ over $[a, b]$, then uses these bounds to construct a degree $k$ Taylor polynomial enclosure \cite{jaulin2001interval,hansen1979global,hansen2003global,moore1966interval}.

A different type of polynomial upper and lower bounds are provided by \emph{Taylor models} \cite{makino1996remainder,makino1999efficient,makino2001higher,makino2003taylor}.  In Taylor models, the upper and lower bounds differ only in the constant term (whereas in a Taylor polynomial enclosure, the upper and lower bounds differ only in the maximum-degree coefficient).  This means that Taylor models give upper and lower bounds that are not tight at $x_0$, as illustrated in Figure~\ref{fig:tm_vs_tpe}, which rules out the application to MM optimization discussed in \S\ref{sec:applications}.

\begin{figure}[h]
\begin{center}
\includegraphics[width=\figwidth]{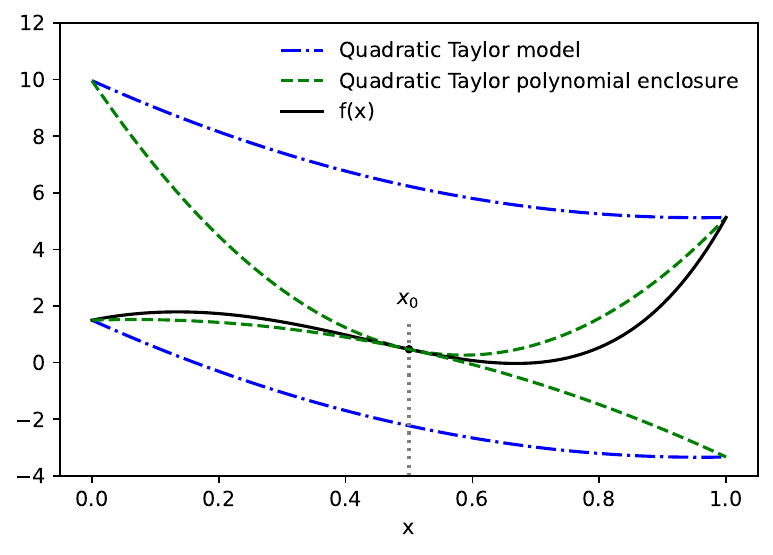}
\caption{The difference between Taylor models and Taylor polynomial enclosures.  Only the latter provide bounds that are tight at $x_0$, which is necessary for the application to majorization-minimization optimization.}
\label{fig:tm_vs_tpe}
\end{center}
\end{figure}

The work in this paper was inspired by a paper by de Leeuw and Lange \cite{de2009sharp}, who introduced the notion of sharp quadratic majorizers for one-dimensional functions, and derived closed-form majorizers for a number of common functions.  Using our terminology, deriving a sharp quadratic majorizer for a function $f: \reals \to \reals$ is equivalent to deriving the upper bound of a degree $2$ sharp Taylor polynomial enclosure, over the trust region $[a, b] = [-\infty, \infty]$.  Thus, our results generalize those of \cite{de2009sharp} in three ways: by deriving polynomial bounds of arbitrary degree, by providing both upper and lower bounds, and by deriving the tightest bounds that are valid over an arbitrary trust region.  We note that the use of a finite trust region is essential for computing Taylor polynomial enclosures for functions such as $\exp$, which asymptotically grow faster than any fixed degree polynomial.

A different approach to obtaining tighter bounds for one-dimensional functions was taken in \cite{de2012polynomial}.  Observing that higher-degree polynomials provide tighter bounds near $x_0$ but that lower-degree polynomials can provide tighter bounds far from $x_0$, \cite{de2012polynomial} derives piecewise-polynomial bounds that attempt to get the best of both worlds.  This technique is complementary to our work, and can be applied on top of it.

Extending the results of de Leeuw and Lange in a different direction, Browne and McNicholas \cite{browne2015multivariate} derived sharp quadratic majorizers for multivariate functions.  Generalizing their results to take into account a trust region would be an interesting area of future work.

\section{Definitions and Notation} \label{sec:spoly_definitions}


For a $j$-times differentiable function $f: \reals \to \reals$, and a scalar $x_0 \in \reals$, we denote the degree $j$ Taylor polynomial of $f$ at $x_0$ by
\begin{equation}
  T_j(x; f, x_0) \eqdef \sum_{i=0}^j \frac {1} {i!} f^{(i)}(x_0) (x - x_0)^i.
\end{equation}
We denote the corresponding remainder term by
\begin{equation}
  R_j(x; f, x_0) \eqdef f(x) - T_j(x; f, x_0) .
\end{equation}
Consistent with this definition, we define $T_{-1}(x; f, x_0) \eqdef 0$ and $R_{-1}(x; f, x_0) \eqdef f(x)$.

We now define notation that is standard in interval analysis \cite{moore1966interval,jaulin2001interval}.
The set of closed real intervals is denoted by $\intervals \eqdef \set { [a, b]: a, b \in \reals, a \le b }$.
\ifarxiv
We use capital letters for intervals.
\else
We use bold lower case symbols for real intervals.
\fi
The product of an interval $\gi = [\gilep, \girep] \in \intervals$ and scalar $\alpha \in \reals$ is defined as $\gi \alpha \eqdef  \set{z \alpha: z \in \gi}$, and $\alpha \gi$ is defined identically.  Observe that if $\alpha \ge 0$, $\gi \alpha = [\gilep \alpha, \girep \alpha]$, while if $\alpha \le 0$, $\gi \alpha = [\girep \alpha, \gilep \alpha]$.

With this notation in place, we can now define Taylor polynomial enclosures.

\begin{definition} [Taylor polynomial enclosure] \label{def:tif}
If, for some $(k-1)$-times differentiable function $f: \reals \to \reals$, scalar $x_0 \in \reals$, and trust region $[a, b] \in \intervals$, an interval $\gi \in \intervals$ satisfies
\[
  R_{k-1}(x; f, x_0) \in \gi (x - x_0)^k \quad \forall x \in [a, b]
\]
then the function $\fenclosure: \reals \to \intervals$ defined by
\[
  \fenclosure(x) = T_{k-1} (x; f, x_0) + \gi (x - x_0)^k
\]
is a \emph{degree $k$ Taylor polynomial enclosure} of $f$ at $x_0$ over $[a, b]$.
\end{definition}
Observe that if $\fenclosure$ is a degree $k$ Taylor polynomial enclosure of $f$ at $x_0$ over $[a, b]$, then $f(x) \in \fenclosure(x)$ for all $x \in [a, b]$.

Of all the intervals $\gi$ that define a Taylor polynomial enclosure, there is a unique narrowest interval, whose end points are given by the following proposition.

\newcommand{\propuniquesharp}{
For a $(k-1)$-times differentiable function $f: \reals \to \reals$, scalar $x_0 \in \reals$, and trust region $[a, b]$, an interval $\gi = [\gilep, \girep] \in \intervals$ defines a degree $k$ Taylor polynomial enclosure of $f$ at $x_0$ over $[a, b]$ if any only if
\[
  \gilep \le \inf_{x \in [a, b] \setminus \set{x_0}} \set { \frac{ R_{k-1}(x; f, x_0) }{ (x - x_0)^{k} } }
\]
and
\[
  \girep \ge \sup_{x \in [a, b] \setminus \set{x_0}} \set { \frac{ R_{k-1}(x; f, x_0) }{ (x - x_0)^{k} } }.
\]
}
\newcommand{\propuniquesharpproof}{
By Definition~\ref{def:tif}, $\gi$ defines a Taylor polynomial enclosure of $f$ at $x_0$ over $[a, b]$ iff.\ for all $x \in [a, b]$,
\begin{equation} \label {eq:enclosure}
  R_{k-1}(x; f, x_0) \in \gi (x - x_0)^{k}.
\end{equation}
Because $R_{k-1}(x_0; f, x_0) = 0$, \eqref{eq:enclosure} holds trivially for $x = x_0$.  For $x \neq x_0$, multiplying both sides by $\frac {1} {(x - x_0)^{k}}$ and using \eqref{eq:scalar_interval_product_associative} shows that \eqref{eq:enclosure} is equivalent to $\frac{R_{k-1}(x; f, x_0)} {(x - x_0)^{k}} \in \gi$.  Thus, $\gi$ defines a Taylor polynomial enclosure of $f$ at $x_0$ iff.\  $\frac{R_{k-1}(x; f, x_0)} {(x - x_0)^{k}} \in \gi$ for all $x \in [a, b] \setminus \set{x_0}$, which is equivalent to the two inequalities listed in the proposition.
}
\begin{proposition} \label{prop:unique_sharp}
\propuniquesharp
\end{proposition}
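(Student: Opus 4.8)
The plan is to unwind Definition~\ref{def:tif} and translate the interval membership it requires into scalar inequalities, one value of $x$ at a time, and then aggregate. By Definition~\ref{def:tif}, the interval $\gi = [\gilep,\girep]$ defines a degree $k$ Taylor polynomial enclosure of $f$ at $x_0$ over $[a,b]$ precisely when
\[
  R_{k-1}(x; f, x_0) \in \gi (x - x_0)^k \qquad \text{for every } x \in [a,b],
\]
so the proposition is really just a restatement of this universally quantified membership, and the whole argument reduces to analyzing it pointwise in $x$.

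For $x = x_0$ the condition is vacuous: since $R_{k-1}(x_0; f, x_0) = f(x_0) - T_{k-1}(x_0; f, x_0) = 0$ and $(x_0 - x_0)^k = 0$ (as $k \ge 1$), it reads $0 \in \gi\cdot 0 = \set{0}$, which holds for every $\gi$; hence $x_0$ may be dropped from the quantifier. For a fixed $x \ne x_0$ I would invoke the definition $\gi\alpha \eqdef \set{z\alpha : z \in \gi}$, which yields, for any nonzero scalar $\alpha$, the equivalence $y \in \gi\alpha \iff y/\alpha \in \gi$; applying it with $\alpha = (x - x_0)^k \ne 0$ and $y = R_{k-1}(x; f, x_0)$ converts the membership $R_{k-1}(x; f, x_0) \in \gi (x-x_0)^k$ into the pair of scalar inequalities
\[
  \gilep \;\le\; \frac{R_{k-1}(x; f, x_0)}{(x - x_0)^k} \;\le\; \girep .
\]

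Finally I would quantify over $x \in [a,b] \setminus \set{x_0}$. Requiring $\gilep$ to lie at or below $R_{k-1}(x;f,x_0)/(x-x_0)^k$ for every such $x$ is exactly requiring $\gilep$ to be a lower bound of that family, i.e.\ $\gilep \le \inf_{x \in [a,b]\setminus\set{x_0}}\set{ R_{k-1}(x;f,x_0)/(x-x_0)^k }$, and symmetrically $\girep$ bounds the family above iff $\girep \ge \sup_{x \in [a,b]\setminus\set{x_0}}\set{ R_{k-1}(x;f,x_0)/(x-x_0)^k }$; combined with the pointwise equivalence this is precisely the claimed characterization. The only point that needs care is the elementary scalar--interval identity $y \in \gi\alpha \iff y/\alpha \in \gi$ for $\alpha \ne 0$ (immediate from the definition of the scalar--interval product) together with its degeneration at $\alpha = 0$, which is exactly why $x_0$ must be excluded from the infimum and supremum. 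Degenerate index sets cause no trouble: if $[a,b] = \set{x_0}$ the ratio family is empty and the conventions $\inf = +\infty$, $\sup = -\infty$ make every $\gi$ admissible, matching the vacuity of the membership condition; and if the ratios are unbounded above or below, then no real interval satisfies the membership and both sides of the equivalence are simply false.
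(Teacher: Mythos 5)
Your proposal is correct and follows essentially the same route as the paper's own proof: dismiss $x = x_0$ because $R_{k-1}(x_0; f, x_0) = 0$, use the scalar--interval product property to rewrite the membership at $x \neq x_0$ as $\frac{R_{k-1}(x; f, x_0)}{(x - x_0)^k} \in \gi$, and then translate the universal quantification over $[a,b] \setminus \set{x_0}$ into the two infimum/supremum inequalities. Your extra remarks on the degenerate and unbounded cases are fine but not needed beyond what the paper records.
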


Proposition~\ref{prop:unique_sharp} follows from Definition~\ref{def:tif} using elementary properties of interval arithmetic; see Appendix A for a formal proof.


Proposition~\ref{prop:unique_sharp} shows that there is a unique sharp Taylor polynomial enclosure, defined formally in Definition~\ref{def:spoly}.

\begin{definition} [Sharp Taylor polynomial enclosure] \label{def:spoly}
For a $(k-1)$-times differentiable function $f: \reals \to \reals$, scalar $x_0 \in \reals$, and trust region $[a, b]$, the degree $k$ \emph{sharp Taylor polynomial enclosure} of $f$ at $x_0$ over $[a, b]$ is the Taylor polynomial enclosure defined by the interval
\[
  \Istar_k(f, x_0, [a, b]) \eqdef \left [ \inf_{x \in [a, b] \setminus \set{x_0}} \set { \frac{ R_{k-1}(x; f, x_0) }{ (x - x_0)^k } }, \sup_{x \in [a, b] \setminus \set{x_0}} \set { \frac{ R_{k-1}(x; f, x_0) }{ (x - x_0)^k } } \right ].
\]
\end{definition}
We will use $\Istarlep_k$ and $\Istarrep_k$ to denote the left and right endpoints, respectively, of $\Istar_k$.

\subsection{Discussion of Definitions when $k$ is Odd}

It is worth noting that the upper and lower bounds given by a Taylor polynomial enclosure are only polynomials when $k$ is even.  When $k$ is odd, the bounds are piecewise polynomials, with one piece for $x \le x_0$ and one piece for $x \ge x_0$.  This stems from the fact that if $\gi = [\gilep, \girep]$ defines a degree $k$ Taylor polynomial enclosure, then if $k$ is even, $\gi (x - x_0)^k = [ \gilep (x - x_0)^k, \girep(x - x_0)^k ]$ for all $x$, while if $k$ is odd this equality holds only for $x \ge x_0$, and for $x < x_0$ we instead have $\gi (x - x_0)^k = [ \girep (x - x_0)^k, \gilep(x - x_0)^k ]$.

\begin{figure}[h]
\begin{center}
\includegraphics[width=\sidebysidefigwidth]{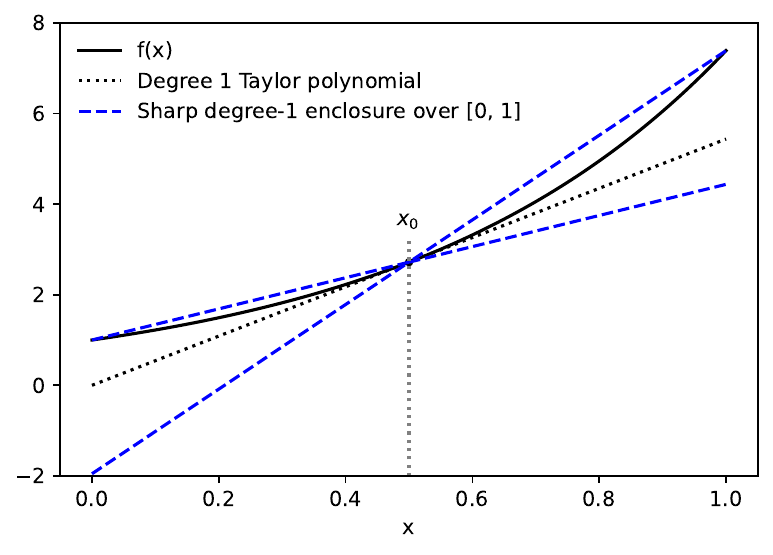}
\includegraphics[width=\sidebysidefigwidth]{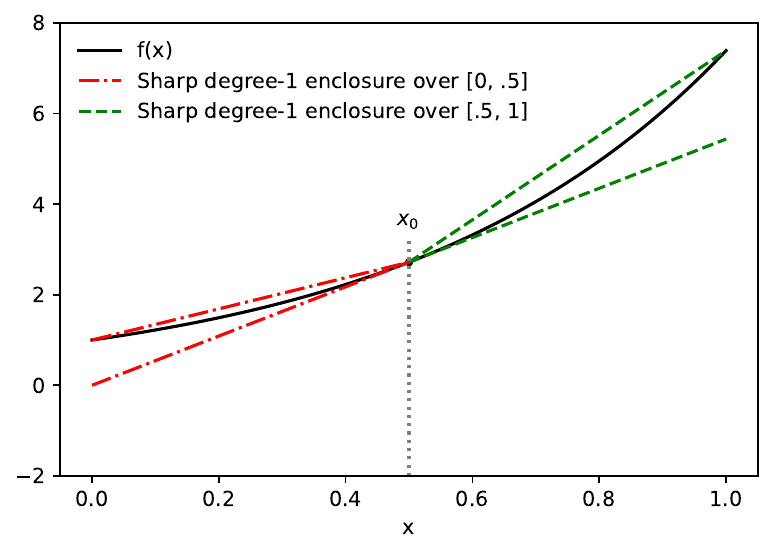}
\caption{Sharp degree-1 enclosures for the convex function $f(x) = e^{2x}$ at $x_0 = .5$. 
}
\label{fig:degree_1}
\end{center}
\end{figure}

One possibly surprising consequence of this is that if $f$ is convex, the lower bound given by the sharp degree 1 Taylor polynomial enclosure may be looser than the lower bound given by the degree 1 Taylor polynomial, as illustrated in Figure~\ref{fig:degree_1} (left).  
This happens because the slopes of the four line segments that comprise the upper and lower bounds are defined by just two parameters, which for odd $k$ are shared between the upper and lower bounds, and in Figure~\ref{fig:degree_1} (left) it is not possible to make the lower bound tighter without making the upper bound invalid.

At first, this might seem like a flaw in the definition of sharp Taylor polynomial enclosures.  However, this surprising behavior can be avoided by always making $x_0$ one of the endpoints of the trust region if $k$ is odd.  As illustrated in Figure~\ref{fig:degree_1} (right), computing separate sharp enclosures for the trust regions $[a, x_0]$ and $[x_0, b]$ and combining them yields a piecewise-linear lower bound that is at least as tight as any affine lower bound that is exact at $x_0$.

A tempting alternative definition, which coincides with Definition~\ref{def:tif} when $k$ is even, would define a Taylor polynomial enclosure by a pair $(\alpha, \beta)$ such that $\alpha (x - x_0)^k \le R_{k-1}(x; f, x_0) \le \beta (x - x_0)^k$, thereby avoiding the parameter sharing between upper and lower bounds.  The downside of this approach is that such $\alpha$ and $\beta$ do not typically exist when $k$ is odd.  In particular for $k = 1$, it can be shown that such an $(\alpha, \beta)$ pair only exists if $f$ is affine.  In contrast, the use of interval arithmetic in Definition~\ref{def:tif} ensures that degree $k$ Taylor enclosures exist for arbitrary analytic functions.

\section{Baseline: Non-Sharp Enclosures via Bounded Derivatives} \label{sec:spoly_baseline}

Before attempting to derive \emph{sharp} Taylor polynomial enclosures, we review a well-known way to derive (possibly non-sharp) Taylor polynomial enclosures using bounds on derivatives.

Suppose we desire a degree $k$ Taylor polynomial enclosure of a function $f: \reals \to \reals$, valid over $[a, b]$, and tight at $x_0 \in [a, b]$, and further suppose that $f$ is $k$ times differentiable (rather than merely $k-1$ times differentiable, as required by Definition~\ref{def:tif}).  Then, using the Lagrange form of the Taylor remainder series and the mean value theorem \cite{apostol1991calculus}, we can show that for any $x \in \reals$ there exists a $y$ in the closed interval between $x_0$ and $x$, such that
\begin{equation}
  R_{k-1}(x; f, x_0) = \frac{1}{k!} f^{(k)}(y) (x - x_0)^{k}.
\end{equation}
Because $x_0, x \in [a, b]$ we have $y \in [a, b]$ also, and thus
\begin{equation} \label{eq:lagrange_remainder}
  R_{k-1}(x; f, x_0) \in \frac{1}{k!} \left [\inf_{y \in [a, b]} \set { f^{(k)}(y) }, \sup_{y \in [a, b]} \set { f^{(k)}(y) } \right ] (x - x_0)^{k}.
\end{equation}
Equation \eqref{eq:lagrange_remainder} thus defines a Taylor polynomial enclosure of $f$ at $x_0$ over $[a, b]$.

In special cases, the end points of the interval in \eqref{eq:lagrange_remainder} can be computed in closed form.  For example, if $f^{(k)}$ is monotonically increasing, the interval in \eqref{eq:lagrange_remainder} simplifies to $[f^{(k)}(a), f^{(k)}(b)]$.
Even in these special cases, however, the resulting Taylor polynomial enclosure will not be sharp in general.  For example, if used to compute a Taylor polynomial enclosure of $\exp$ at $x_0 = \frac 1 2$ over the trust region $[0, 2]$, this method yields the Taylor polynomial enclosure
\begin{equation} \label{eq:exp_non_sharp}
  \exp(x) \in \sqrt{e} + \sqrt{e} \paren { x - \frac 1 2 } + \left [ 0.5, 3.6945 \right ] \paren { x - \frac 1 2 }^2 \quad \mbox { for } x \in [0, 2].
\end{equation}
In contrast, as shown in \eqref{eq:exp_sharp_quadratic}, the sharp Taylor polynomial enclosure is defined by the interval $[0.70255, 1.4522]$, which gives significantly tighter upper and lower bounds.
A more extreme example of non-sharpness arises when computing a quadratic upper bound for the function $\mathrm{relu}(x) \eqdef \max \set{x, 0}$.  As long as $0 \in [a, b]$, the interval in \eqref{eq:lagrange_remainder} is $[0, \infty]$, leading to a vacuous upper bound.  In contrast, the sharp quadratic Taylor polynomial enclosure will give a finite upper bound as long as $x_0 \neq 0$.

If $f^{(k)}$ is not known to have any special properties (such as being monotonically increasing or decreasing), an interval that encloses the interval on the right hand side of \eqref{eq:lagrange_remainder} can be obtained by first deriving an expression for $f^{(k)}(y)$ (e.g., using automatic differentiation), and then evaluating this expression using interval arithmetic \cite{jaulin2001interval,hansen1979global,hansen2003global,moore1966interval}.  Given an interval that contains $y$ (here $[a, b]$), this procedure provides an interval that is guaranteed to contain $f^{(k)}(y)$ for all $y \in [a, b]$, and hence contains the interval on the right hand side of \eqref{eq:lagrange_remainder}.  Note, however, that the resulting interval depends on the expression for $f^{(k)}(y)$, and may be much wider than the interval in \eqref{eq:lagrange_remainder}, introducing an additional source of non-sharpness.


\section{Sharp Taylor Polynomial Enclosures} \label{sec:sharp}

We now develop theory that lets us compute sharp Taylor polynomial enclosures of arbitrary degree for one-dimensional functions with certain properties:
\begin{itemize}
  \item In \S\ref{sec:monotone}, we show how to compute sharp Taylor polynomial enclosures for functions whose $k$th derivative is monotonically increasing or decreasing (such as $\exp$ or $\log$).
  \item In \S\ref{sec:even_symmetric}, we show how to compute sharp \emph{quadratic} Taylor polynomial enclosures for functions whose Hessian is even-symmetric (such as $\mathrm{softplus}$, $\mathrm{relu}$, and a number of other commonly-used neural network activation functions). 
\end{itemize}

\subsection{Functions with Monotone Derivative} \label{sec:monotone}

\ignore{
Applying a monotonically increasing function to an interval is straightforward: if $f: \reals \to \reals$ is monotonically increasing, then $f([a, b]) = [f(a), f(b)]$.  Similarly, if $f: \reals \to \reals$ is monotonically decreasing, $f([a, b]) = [f(b), f(a)]$.
}

We will now show that the interval defining the sharp Taylor polynomial enclosure of $f$ (at some $x_0 \in \reals$ over some trust region $[a, b] \in \intervals$) can be computed in closed form in the special case where $f^{(k)}$ is monotonically increasing or monotonically decreasing.  Among other things, this will let us compute sharp Taylor polynomial enclosures for $\exp$ over any interval, for $\log$ over any interval $[a, b]$ with $a > 0$, and for the reciprocal function over any $[a, b]$ with $0 \notin [a, b]$.

To build intuition, we will first show that if $f'(x)$ is monotonically increasing, then the degree 1 sharp Taylor polynomial enclosure of $f$ is defined by the interval
\begin{equation} \label{eq:sharp_degree_1_monotone}
  \Istar_1(f, x_0, [a, b]) = \left [ \frac { R_0(a; f, x_0) } {a - x_0}, \frac { R_0(b; f, x_0) } {b - x_0} \right ].
\end{equation}
To see this, observe that if $f'$ is monotonically increasing, then for $k = 1$ and $x \neq x_0$ we have
\begin{align}
  \frac{\dd}{\dd x} \frac{R_{k-1}(x; f, x_0)}{(x - x_0)^{k}}
  & = \frac{\dd}{\dd x} \frac { f(x) - f(x_0) } {x - x_0} \nonumber \\
  & = \frac {f'(x)}{x - x_0} - \frac{f(x) - f(x_0)} {(x - x_0)^2} \nonumber \\
  & = \frac {1} {(x - x_0)^2} \paren {  (x - x_0) f'(x) - (f(x) - f(x_0)) } \nonumber \\
  & = \frac {1} {(x - x_0)^2} \int_{y=x_0}^x f'(x) - f'(y) \dd y \nonumber \\
  & \ge 0.
\end{align}
Thus, for $k = 1$, we see that if $f'$ is monotonically increasing, then $\frac{R_{k-1}(x; f, x_0)}{(x - x_0)^{k}}$ is also monotonically increasing.  Therefore, by Definition~\ref{def:spoly}, the end points of $\Istar_{k}(f, x_0, [a, b])$ are $\inf_{x \in [a, b]} \set { \frac{R_{k-1}(x; f, x_0)}{(x - x_0)^{k}} } = \frac{R_{k-1}(a; f, x_0)}{(a - x_0)^{k}}$ and $\sup_{x \in [a, b]} \set { \frac{R_{k-1}(x; f, x_0)}{(x - x_0)^{k}} } = \frac{R_{k-1}(b; f, x_0)}{(b - x_0)^{k}}$, respectively.  Plugging in $k = 1$ then gives \eqref{eq:sharp_degree_1_monotone}.

We will generalize this result to obtain the following theorem, which allows us to compute a sharp degree $k$ Taylor polynomial enclosure of any function whose $k$th derivative is monotonically increasing or decreasing.

\begin{theorem} \label {thm:spoly_monotone}
Let $f: \reals \to \reals$ be a function that is $k$-times differentiable over an interval $[a, b]$, for $k \ge 0$.
If $f^{(k)}$ is monotonically increasing over $[a, b]$, then for $x_0 \in (a, b)$, the degree $k$ sharp Taylor polynomial enclosure of $f$ at $x_0$ over $[a, b]$ is given by $\Istar_{k}(f, x_0, [a, b]) = \left [ \frac {R_{k-1}(a; f, x_0)} {(a - x_0)^{k}},  \frac {R_{k-1}(b; f, x_0)} {(b - x_0)^{k}} \right ]$.  If $f^{(k)}$ is monotonically decreasing over $[a, b]$, we instead have $\Istar_{k}(f, x_0, [a, b]) = \left [ \frac {R_{k-1}(b; f, x_0)} {(b - x_0)^{k}},  \frac {R_{k-1}(a; f, x_0)} {(a - x_0)^{k}} \right ]$.
\end{theorem}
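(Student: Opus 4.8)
The plan is to reduce both claims to a single monotonicity statement about the function $g(x) \eqdef R_{k-1}(x; f, x_0)/(x-x_0)^k$, which by Definition~\ref{def:spoly} is exactly what determines the endpoints of $\Istar_k$. Concretely, I will show that $g$ inherits the sense of monotonicity of $f^{(k)}$ on $[a,b]$, so that (using $x_0 \in (a,b)$, hence $a,b \ne x_0$) its infimum and supremum over $[a,b]\setminus\{x_0\}$ are attained at $a$ and $b$. The case $k=0$ is immediate, since then $g = f = f^{(0)}$; and the $k=1$ instance is precisely the computation already carried out in the text. So assume $k \ge 1$.

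The one real computation is a change of variables. Since $f$ is $k$-times differentiable on $[a,b]$ and $f^{(k)}$, being monotone there, is integrable, the integral form of the Taylor remainder applies:
\[
  R_{k-1}(x; f, x_0) = \frac{1}{(k-1)!}\int_{x_0}^x (x-t)^{k-1} f^{(k)}(t)\,\dd t .
\]
Substituting $t = x_0 + s(x-x_0)$ with $s \in [0,1]$, the factor $(x-x_0)^k$ that appears cancels the denominator of $g$ exactly, giving for every $x \neq x_0$ in $[a,b]$
\[
  g(x) = \frac{1}{(k-1)!}\int_0^1 (1-s)^{k-1}\, f^{(k)}\!\big(x_0 + s(x-x_0)\big)\,\dd s .
\]
This representation makes the monotonicity transparent and sidesteps the sign bookkeeping around $x_0$ (since the $(x-x_0)^k$ in the denominator no longer appears): for each fixed $s \in [0,1]$ the map $x \mapsto x_0 + s(x-x_0)$ is nondecreasing, so if $f^{(k)}$ is monotonically increasing on $[a,b]$ then the integrand is nondecreasing in $x$ pointwise in $s$; since $(1-s)^{k-1} \ge 0$ on $[0,1]$, integrating preserves the inequality and $g$ is nondecreasing on $[a,b]$. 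If instead $f^{(k)}$ is monotonically decreasing, the inequalities reverse and $g$ is nonincreasing.

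To finish, a monotone $g$ on $[a,b]$ satisfies $g(a) \le g(x) \le g(b)$ (resp.\ the reverse) for all $x \in [a,b]$, so $\inf_{x \in [a,b]\setminus\{x_0\}} g(x)$ and $\sup_{x \in [a,b]\setminus\{x_0\}} g(x)$ equal $g(a)$ and $g(b)$ in the increasing case, and $g(b)$ and $g(a)$ in the decreasing case. Substituting $g(a) = R_{k-1}(a; f, x_0)/(a-x_0)^k$ and $g(b) = R_{k-1}(b; f, x_0)/(b-x_0)^k$ into Definition~\ref{def:spoly} yields the two stated formulas. The only point I expect to need care is the appeal to the integral form of the remainder under the weak hypothesis that $f^{(k)}$ is merely monotone rather than continuous; if one prefers to avoid it, an entirely self-contained alternative is to induct on $k$, using $\frac{\dd}{\dd x} R_{k-1}(x; f, x_0) = R_{k-2}(x; f', x_0)$ together with the fundamental theorem of calculus to propagate the analogous integral representation of $g$ from degree $k-1$ (applied to $f'$) up to degree $k$. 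Everything past the representation of $g$ is routine.
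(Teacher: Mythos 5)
Your proof is correct, and it takes a genuinely different route from the paper's. The paper proves the result by differentiating the ratio $g(x) = R_{k-1}(x;f,x_0)/(x-x_0)^k$: Lemma~\ref{lem:ratio_deriv_integral} expresses $g'(x)$ as $\frac{1}{(x-x_0)^{k+1}}\int_{x_0}^{x} f^{(k)}(t)\,\weight(t)\,\dd t$ with the weight $\weight(t) = (x-x_0)\frac{(x-t)^{k-2}}{(k-2)!} - \frac{k}{(k-1)!}(x-t)^{k-1}$, and Lemma~\ref{lem:monotone_deriv} then carries out a delicate sign analysis around the pivot $t^* = x - \frac{k-1}{k}(x-x_0)$, exploiting $\int_{x_0}^x \weight(t)\,\dd t = 0$ to subtract $f^{(k)}(t^*)$ and make the integrand single-signed. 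You instead rescale the integral form of the remainder (Proposition~\ref{prop:rd_integral}) by the substitution $t = x_0 + s(x-x_0)$, obtaining $g(x) = \frac{1}{(k-1)!}\int_0^1 (1-s)^{k-1} f^{(k)}\paren{x_0 + s(x-x_0)}\,\dd s$, from which monotonicity of $g$ in the same sense as $f^{(k)}$ is immediate, with no differentiation of $g$ and no sign bookkeeping. This buys two things: it is shorter, and it avoids the extra regularity the paper's Lemma~\ref{lem:monotone_deriv} quietly assumes (that $f^{(k)}$ is differentiable with nonvanishing derivative), so your argument actually covers the theorem's stated hypotheses more faithfully. The one step you should make explicit rather than merely flag is the applicability of the integral remainder form: Proposition~\ref{prop:rd_integral} needs $f^{(k-1)}$ absolutely continuous on $[a,b]$, and this follows here because a monotone $f^{(k)}$ is bounded on $[a,b]$ by its endpoint values, so $f^{(k-1)}$ has bounded derivative and is Lipschitz, hence absolutely continuous; with that sentence added, no fallback induction is needed. (Also note your representation extends continuously to $x = x_0$ with value $\frac{1}{k!}f^{(k)}(x_0)$, which makes the endpoint attainment of the infimum and supremum over $[a,b]\setminus\set{x_0}$ entirely routine, as you say.)
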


In the special case $k = 0$, Theorem~\ref{thm:spoly_monotone} says that if $f$ is monotonically increasing, then $\Istar_0(f, x_0, [a, b]) = [f(a), f(b)]$, recovering the trivial fact that if $f$ is monotonically increasing, then $[f(a), f(b)]$ is the narrowest interval that contains $\set{f(x): x \in [a, b]}$.  In the special case $k = 1$, Theorem~\ref{thm:spoly_monotone} recovers the result we just derived.

We now outline the proof of Theorem~\ref{thm:spoly_monotone} for arbitrary $k$.  First note that, to prove the theorem, it suffices to show that if $f^{(k)}$ is monotonically increasing (resp.\ decreasing), then $\frac{\dd}{\dd x} \frac { R_{k-1}(x; f, x_0)  } { (x - x_0)^{k} }$ is non-negative (resp.\ non-positive) for $x \in [a, b]$.  The first step in showing this is to write $\frac{\dd}{\dd x} \frac { R_{k-1}(x; f, x_0)  } { (x - x_0)^{k} }$ as a weighted integral of $f^{(k)}$.  To do so, we use the following two propositions.

\newcommand{\proprdderiv}{
For any $j$-times differentiable function $f: \reals \to \reals$,
\[
  \frac {\dd}{\dd x} R_j(x; f, x_0) = R_{j-1}(x; f', x_0).
\]
}
\newcommand{\proprdderivproof}{
\begin{align*}
  \frac {\dd}{\dd x} R_j(x; f, x_0)
  & = \frac {\dd}{\dd x} \paren { f(x) - \sum_{i=0}^j \frac {1} {i!} f^{(i)}(x_0) (x - x_0)^i } \\
  & = f'(x) - \sum_{i=1}^j \frac {1} {(i-1)!} f^{(i)}(x_0) (x - x_0)^{i-1} \\
  & = f'(x) - \sum_{i=1}^j \frac {1} {(i-1)!} (f')^{(i-1)}(x_0) (x - x_0)^{i-1} \\
  & = f'(x) - \sum_{h=0}^{j-1} \frac {1} {h!} (f')^{(h)}(x_0) (x - x_0)^{h} \\
  & = R_{j-1}(x; f', x_0).
\end{align*}
}
\begin{proposition} \label{prop:rd_deriv}
\proprdderiv
\end{proposition}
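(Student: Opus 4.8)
The plan is to prove Proposition~\ref{prop:rd_deriv} directly from the definitions, by differentiating the Taylor polynomial $T_j(x; f, x_0)$ term by term in $x$ and re-indexing the resulting sum. Since $R_j(x; f, x_0) = f(x) - T_j(x; f, x_0)$ and $f$ is $j$-times differentiable (so that every coefficient $f^{(i)}(x_0)$, $i \le j$, exists and $f'$ is $(j-1)$-times differentiable), it suffices to show that $\frac{\dd}{\dd x} T_j(x; f, x_0) = T_{j-1}(x; f', x_0)$; subtracting this from $\frac{\dd}{\dd x} f(x) = f'(x)$ then yields $\frac{\dd}{\dd x} R_j(x; f, x_0) = f'(x) - T_{j-1}(x; f', x_0) = R_{j-1}(x; f', x_0)$.

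For the main computation I would expand $T_j(x; f, x_0) = \sum_{i=0}^{j} \frac{1}{i!} f^{(i)}(x_0)(x - x_0)^i$ and differentiate. The $i = 0$ term is constant in $x$ and drops out; for $i \ge 1$ the term $\frac{1}{i!} f^{(i)}(x_0)(x - x_0)^i$ differentiates to $\frac{1}{(i-1)!} f^{(i)}(x_0)(x - x_0)^{i-1}$, so $\frac{\dd}{\dd x} T_j(x; f, x_0) = \sum_{i=1}^{j} \frac{1}{(i-1)!} f^{(i)}(x_0)(x - x_0)^{i-1}$. I would then use $f^{(i)}(x_0) = (f')^{(i-1)}(x_0)$ and the substitution $h = i - 1$ to rewrite this sum as $\sum_{h=0}^{j-1} \frac{1}{h!} (f')^{(h)}(x_0)(x - x_0)^h = T_{j-1}(x; f', x_0)$, which is exactly what is needed.

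Finally I would check the boundary case $j = 0$ separately, where the term-by-term argument degenerates into an empty sum. There $R_0(x; f, x_0) = f(x) - f(x_0)$, so $\frac{\dd}{\dd x} R_0(x; f, x_0) = f'(x)$, and this matches $R_{-1}(x; f', x_0)$ under the stated convention $R_{-1}(x; g, x_0) \eqdef g(x)$; correspondingly $T_{-1}(x; f', x_0) = 0$ is the empty sum, so the two sides agree.

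I do not expect any genuine obstacle: the argument is a routine differentiation together with a shift of summation index. The only points requiring care are bookkeeping ones — discarding the constant $i = 0$ term, performing the index shift $h = i - 1$ cleanly, identifying $f^{(i)}(x_0)$ with $(f')^{(i-1)}(x_0)$, and verifying that the conventions $T_{-1} = 0$ and $R_{-1}(x; g, x_0) = g(x)$ make the identity hold at $j = 0$.
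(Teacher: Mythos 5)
Your proposal is correct and follows essentially the same route as the paper's proof: differentiate the Taylor polynomial term by term, discard the constant term, identify $f^{(i)}(x_0)$ with $(f')^{(i-1)}(x_0)$, and shift the summation index to recognize $T_{j-1}(x; f', x_0)$. Your explicit check of the $j=0$ boundary case is a harmless extra verification that the paper leaves implicit via the conventions $T_{-1} = 0$ and $R_{-1}(x; g, x_0) = g(x)$.
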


Note that the proposition is valid for $j = 0$, in which case it simply says that $\frac {\dd}{\dd x} R_0(x; f, x_0) = f'(x)$ (recall that we define $R_{-1}(x; f', x_0) \eqdef f'(x)$).  Proposition~\ref{prop:rd_deriv} is straightforward to prove, and a formal proof is given in Appendix A.

We will also use the well-known integral form of the Taylor remainder series \cite{apostol1991calculus}.
\begin{proposition} [Integral form of Taylor remainder series] \label{prop:rd_integral}
For any function $f: \reals \to \reals$, and any integer $j \ge 0$, where $f^{(j)}$ is absolutely continuous over the closed interval between $x_0$ and $x$,
\[
  R_j(x; f, x_0) = \int_{t=x_0}^x f^{(j+1)}(t) \frac {1} {j!} (x -t)^j \dd t.
\]
\end{proposition}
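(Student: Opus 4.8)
The plan is to prove the formula by induction on $j$, using the fundamental theorem of calculus for the base case and integration by parts for the inductive step. Write $G_j(x) \eqdef \int_{t=x_0}^x f^{(j+1)}(t) \frac{1}{j!}(x-t)^j \dd t$ for the claimed expression; the goal is to show $G_j(x) = R_j(x; f, x_0)$. For the base case $j = 0$, the hypothesis says $f$ itself is absolutely continuous between $x_0$ and $x$, so the fundamental theorem of calculus (in its Lebesgue form for absolutely continuous functions) gives $G_0(x) = \int_{t=x_0}^x f'(t)\dd t = f(x) - f(x_0)$, and this matches $R_0(x; f, x_0) = f(x) - T_0(x; f, x_0) = f(x) - f(x_0)$ directly from the definitions.

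For the inductive step, I would assume the formula at level $j-1$, i.e.\ $R_{j-1}(x; f, x_0) = \int_{t=x_0}^x f^{(j)}(t)\frac{1}{(j-1)!}(x-t)^{j-1}\dd t$, and integrate the right-hand side by parts. Taking $u = f^{(j)}(t)$ and $\dd v = \frac{1}{(j-1)!}(x-t)^{j-1}\dd t$, an antiderivative in $t$ is $v = -\frac{1}{j!}(x-t)^j$, while $\dd u = f^{(j+1)}(t)\dd t$. The boundary term $\bigl[-f^{(j)}(t)\frac{1}{j!}(x-t)^j\bigr]_{t=x_0}^{t=x}$ vanishes at $t = x$ because $(x-x)^j = 0$ for $j \ge 1$, and at $t = x_0$ it contributes $f^{(j)}(x_0)\frac{1}{j!}(x-x_0)^j$, so integration by parts yields $R_{j-1}(x; f, x_0) = f^{(j)}(x_0)\frac{1}{j!}(x-x_0)^j + G_j(x)$. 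Comparing the definitions of $T_j$ and $T_{j-1}$ shows $R_j(x; f, x_0) = R_{j-1}(x; f, x_0) - f^{(j)}(x_0)\frac{1}{j!}(x-x_0)^j$, so rearranging the previous identity gives $G_j(x) = R_j(x; f, x_0)$, completing the induction.

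The main obstacle is not the algebra but ensuring the analytic operations are licensed at exactly the stated regularity. Absolute continuity of $f^{(j)}$ is precisely the hypothesis under which $f^{(j+1)}$ exists almost everywhere and is Lebesgue integrable, under which the fundamental theorem of calculus holds in the form used in the base case, and under which integration by parts is valid (the remaining factor, a polynomial in $t$, is smooth). I would also note that the interval ``between $x_0$ and $x$'' may have $x < x_0$; interpreting $\int_{t=x_0}^x$ as the oriented integral $-\int_{t=x}^{x_0}$ makes both the base case and the integration-by-parts step hold verbatim, with no separate case analysis. An alternative to the integration-by-parts step is to verify that $G_j$ and $R_j$ agree at $x = x_0$ (both equal $0$) and have equal derivatives in $x$: differentiating $G_j$ under the integral sign produces $\int_{t=x_0}^x f^{(j+1)}(t)\frac{1}{(j-1)!}(x-t)^{j-1}\dd t$, which is $G_{j-1}$ applied to $f'$ and hence equals $R_{j-1}(x; f', x_0)$ by the inductive hypothesis, matching $\frac{\dd}{\dd x}R_j(x; f, x_0)$ via Proposition~\ref{prop:rd_deriv}; I prefer the direct integration-by-parts argument, since it is self-contained and does not route through Proposition~\ref{prop:rd_deriv}.
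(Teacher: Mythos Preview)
Your proof is correct; the induction via integration by parts is the standard argument, and you have handled the regularity issues (absolute continuity licensing the FTC and integration by parts, the oriented integral when $x < x_0$) with appropriate care. The paper does not actually prove this proposition at all: it states the result as the well-known integral form of the Taylor remainder and simply cites a calculus textbook, so there is no proof in the paper to compare against.
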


Taking the derivative $\frac{\dd}{\dd x} \frac {R_{k-1}(x; f, x_0)}{(x - x_0)^{k}}$, and applying Propositions~\ref{prop:rd_deriv} and \ref{prop:rd_integral}, we obtain the following lemma, whose proof is given in Appendix A.

\newcommand{\lemratioderivintegral}{
For any function $f: \reals \to \reals$, scalars $x_0, x  \in \reals$, and integer $k \ge 2$, where $f^{(k-1)}$ is absolutely continuous over the closed interval between $x_0$ and $x$,
\[
  \frac{\dd}{\dd x} \frac{R_{k-1}(x; f, x_0)}{(x - x_0)^{k}}
  =  \frac{1}{(x - x_0)^{k+1}} \int_{t=x_0}^x f^{(k)}(t) \weight(t) \dd t
\]
where
\[
  \weight(t) \eqdef (x - x_0) \frac {1} {(k-2)!} (x - t)^{k-2} - \frac{k}{(k-1)!} (x - t)^{k-1}.
\]
}
\newcommand{\lemratioderivintegralproof}{
Using the rule for the derivative of a ratio, then applying Proposition~\ref{prop:rd_deriv}, we have
\begin{align}
  \frac{\dd}{\dd x} \frac {R_{k-1}(x; f, x_0)} {(x - x_0)^{k}}
  & = \frac {\frac{\dd}{\dd x} R_{k-1}(x; f, x_0)} {(x-x_0)^{k}} - k \frac {R_{k-1}(x; f, x_0)} {(x-x_0)^{k+1}} \nonumber \\
  & = \frac {R_{k-2}(x; f', x_0)} {(x-x_0)^{k}} - k \frac {R_{k-1}(x; f, x_0)} {(x-x_0)^{k+1}} \nonumber \\
  & = \frac {1} {(x - x_0)^{k+1}} \paren { (x - x_0) R_{k-2}(x; f', x_0) - k R_k(x; f, x_0) }. \label{eq:ratio_deriv_expr}
\end{align}
Using the integral form of the Taylor remainder series (Proposition~\ref{prop:rd_integral}) to rewrite $R_{k-2}(x; f', x_0)$ and $R_{k-1}(x; f, x_0)$, we have
\begin{align}
  & (x - x_0) R_{k-2}(x; f', x_0) - k R_{k-1}(x; f, x_0) \nonumber \\
  & \quad \quad = (x - x_0) \int_{t=x_0}^x (f')^{(k-1)}(t) \frac {1} {(k-2)!} (x - t)^{k-2} \dd t - \nonumber \\
  & \quad \quad\quad \quad \quad \quad k \int_{t=x_0}^x f^{(k)}(t) \frac {1} {(k-1)!} (x - t)^{k-1} \dd t \nonumber \\
  & \quad \quad = \int_{t=x_0}^x f^{(k)}(t) \weight(t) \dd t. \label{eq:rewrite_as_integral}
\end{align}
Plugging the expression on the right hand side of \eqref{eq:rewrite_as_integral} into \eqref{eq:ratio_deriv_expr} completes the proof.
}
\begin{lemma} \label{lem:ratio_deriv_integral}
\lemratioderivintegral
\end{lemma}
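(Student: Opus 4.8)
The plan is to combine the ordinary quotient rule with the two forms of the Taylor remainder already recorded in Propositions~\ref{prop:rd_deriv} and \ref{prop:rd_integral}. Throughout I assume $x \neq x_0$, so that the ratio on the left-hand side is defined.

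First I would differentiate using the quotient rule, obtaining
\[
  \frac{\dd}{\dd x} \frac{R_{k-1}(x; f, x_0)}{(x-x_0)^k}
  = \frac{\frac{\dd}{\dd x} R_{k-1}(x; f, x_0)}{(x-x_0)^k} - \frac{k\, R_{k-1}(x; f, x_0)}{(x-x_0)^{k+1}}.
\]
Since $f^{(k-1)}$ is absolutely continuous, $f$ is $(k-1)$-times differentiable, so Proposition~\ref{prop:rd_deriv} with $j = k-1$ applies and yields $\frac{\dd}{\dd x} R_{k-1}(x; f, x_0) = R_{k-2}(x; f', x_0)$. Substituting this and factoring out $(x-x_0)^{-(k+1)}$ reduces the claim to the identity
\[
  (x-x_0)\, R_{k-2}(x; f', x_0) - k\, R_{k-1}(x; f, x_0) = \int_{t=x_0}^x f^{(k)}(t)\, \weight(t)\, \dd t.
\]

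Next I would rewrite the two remainder terms on the left via the integral form of the Taylor remainder, Proposition~\ref{prop:rd_integral}. Applying it to $f$ with $j = k-1$ gives $R_{k-1}(x; f, x_0) = \int_{t=x_0}^x f^{(k)}(t)\frac{1}{(k-1)!}(x-t)^{k-1}\, \dd t$; applying it to $f'$ with $j = k-2$ — legitimate because $(f')^{(k-2)} = f^{(k-1)}$ is absolutely continuous and $(f')^{(k-1)} = f^{(k)}$ — gives $R_{k-2}(x; f', x_0) = \int_{t=x_0}^x f^{(k)}(t)\frac{1}{(k-2)!}(x-t)^{k-2}\, \dd t$. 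Forming the combination $(x-x_0) R_{k-2}(x; f', x_0) - k R_{k-1}(x; f, x_0)$ then merges the two integrals over the common interval into a single integral of $f^{(k)}(t)$ against the coefficient $(x-x_0)\frac{1}{(k-2)!}(x-t)^{k-2} - \frac{k}{(k-1)!}(x-t)^{k-1}$, which is exactly $\weight(t)$. This establishes the identity and hence the lemma.

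I do not anticipate a substantive obstacle; the only care needed is bookkeeping of hypotheses. One should verify that absolute continuity of $f^{(k-1)}$ is precisely enough to make Propositions~\ref{prop:rd_deriv} and \ref{prop:rd_integral} applicable — in particular that $f^{(k)}$ exists almost everywhere and is Lebesgue-integrable over the interval between $x_0$ and $x$, so the integral representations are valid and $(f')^{(k-1)}$ may be replaced by $f^{(k)}$ inside the integrand. One should also note that the restriction $k \ge 2$ is what makes $R_{k-2}$ and the exponent $(x-t)^{k-2}$ meaningful, with the boundary case $k=2$ reducing to $(x-t)^0 = 1$ and $R_0(x; f', x_0) = f'(x) - f'(x_0)$, so no degeneracy occurs. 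Everything else is the routine computation sketched above.
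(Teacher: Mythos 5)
Your proof is correct and follows essentially the same route as the paper's: apply the quotient rule, use Proposition~\ref{prop:rd_deriv} to rewrite $\frac{\dd}{\dd x} R_{k-1}(x; f, x_0)$ as $R_{k-2}(x; f', x_0)$, and then use the integral form of the remainder (Proposition~\ref{prop:rd_integral}) on both remainder terms to merge them into a single integral of $f^{(k)}(t)\weight(t)$. Your additional bookkeeping about absolute continuity and the $k\ge 2$ boundary case is consistent with the paper's hypotheses and introduces no gap.
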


The final and most involved step of the proof is to carefully analyze the sign of the expression given in Lemma~\ref{lem:ratio_deriv_integral} to show that if $f^{(k)}$ is monotonically increasing (decreasing), then $\frac{R_{k-1}(x; f, x_0)}{(x - x_0)^{k}}$ is monotonically increasing (decreasing) as well.  This is proved in Lemma~\ref{lem:monotone_deriv}, from which Theorem~\ref{thm:spoly_monotone} immediately follows.  The proof is given in Appendix A.

\newcommand{\lemmonotonederiv}{
Let $f: \reals \to \reals$ be a function that is $k$ times differentiable over the interval $[a, b]$, for $k \ge 0$.  If $f^{(k)}$ is monotonically increasing (decreasing) over $[a,b]$, then for $x_0 \in (a, b)$, $\frac{R_{k-1}(x; f, x_0)}{(x - x_0)^{k}}$ is monotonically increasing (decreasing) over $[a, b]$.
}
\newcommand{\lemmonotonederivproof}{
For $k = 0$, we have $f^{(k)}(x) = f(x) = \frac{R_{k-1}(x; f, x_0)}{(x - x_0)^{k}}$, and the theorem is trivially true.  For $k = 1$, the proof was given in the main text.
We now consider the general case, $k \ge 2$.

Suppose that $f^{(k)}$ is either monotonically increasing or monotonically decreasing, and define
\begin{equation}
S \eqdef \begin{cases}
1 & \mbox {if } f^{(k)} \mbox{ is monotonically increasing } \\
-1 &\mbox{otherwise.} 
\end{cases}
\end{equation}

To simplify the proof, we will assume that $f^{(k)}$ is differentiable, and that $\frac {\dd} {\dd t} f^{(k)}(t)$ is never exactly 0.  This assumption, together with the assumption that $f^{(k)}$ is either monotonically increasing or decreasing over $[a, b]$, implies that $\sign\paren{\frac{\dd}{\dd t} f^{(k)}(t)}$ is the same for all $t \in [a, b]$, and we have
\begin{equation}
  S = \sign\paren{ \frac {\dd} {\dd t} f^{(k)}(t)} \quad \forall t \in [a, b].
\end{equation}

To prove the lemma, it suffices to show that $\sign\paren{\frac{\dd}{\dd x} \frac{R_{k-1}(x; f, x_0)}{(x - x_0)^{k}}} = S$.  To do so, we first rewrite $\frac{\dd}{\dd x} \frac{R_{k-1}(x; f, x_0)}{(x - x_0)^{k}}$ in terms of $f^{(k)}$.  By Lemma~\ref{lem:ratio_deriv_integral},
\begin{equation} \label{eq:ratio_deriv_expr_from_lemma}
  \frac{\dd}{\dd x} \frac{R_{k-1}(x; f, x_0)}{(x - x_0)^{k}}
  =  \frac{1}{(x - x_0)^{k+1}} \int_{t=x_0}^x f^{(k)}(t) \weight(t) \dd t
\end{equation}
where
\begin{equation} \label{eq:wt_def}
  \weight(t) \eqdef (x - x_0) \frac {1} {(k-2)!} (x - t)^{k-2} - \frac{k}{(k-1)!} (x - t)^{k-1}.
\end{equation}

The bulk of the proof is contained in the following claim.

\noindent\emph{Claim 1.}
\[
  \sign\paren{ \int_{t=x_0}^x f^{(k)}(t) \weight(t) \dd t } = \sign\paren{ (x - x_0)^{k-1} } \cdot S.
\]
\emph{Proof of Claim 1.}
To prove the claim, we first express the sign of $\weight(t)$ in terms of the sign of $t - t^*$, where
\begin{equation}
  t^* \eqdef x - \frac{k-1}{k}(x - x_0).
\end{equation}
Using the definition of $\weight(t)$, and the fact that $\sign(a b) = \sign(a) \sign(b)$ for any $a, b \in \reals$, we have
\begin{align}
  \sign(\weight(t))
  & = \sign((x - t)^{k-2}) \sign \paren { (x - x_0) \frac {1} {(k-2)!} - \frac{k}{(k-1)!} (x - t) } \nonumber \\
  & = \sign((x - t)^{k-2}) \sign \paren { (x - x_0) \frac {k-1} {k} - (x - t) } \nonumber \\
  & = \sign((x - t)^{k-2}) \sign \paren { t - \paren { x - \frac{k-1}{k}(x - x_0) } } \nonumber \\
  & = \sign((x - t)^{k-2}) \sign( t - t^*). \label{eq:sign_wt}
\end{align}
Next, we show that $\int_{t=x_0}^x f^{(k+1)}(t) \weight(t) \dd t$ is equivalent to a different integral whose sign is easier to analyze.  To do so, first note that
\begin{equation}
\int_{t=x_0}^x \weight(t) \dd t
= \paren { (x - x_0) \frac{1}{(k-1)!} (x - t)^{k-1} - \frac{1}{(k-1)!} (x - t)^{k}  } \bigg |_{x_0}^x
= 0.
\end{equation}
Therefore,
\begin{equation} \label{eq:equiv_integral}
  \int_{t=x_0}^x f^{(k)}(t) \weight(t) \dd t = \int_{t=x_0}^x \paren { f^{(k)}(t) - f^{(k)}(t^*) } \weight(t) \dd t.
\end{equation}
We now compute the sign of the integrand on the right hand side of \eqref{eq:equiv_integral}.  To do so, first note that because $f^{(k)}$ is either monotonically increasing or monotonically decreasing,
\begin{equation} \label{eq:sign_deriv_difference}
  \sign \paren { f^{(k)}(t) - f^{(k)}(t^*) } = \sign(t - t^*) \cdot S.
\end{equation}
It follows that, for any $t$ in the closed interval between $x_0$ and $x$,
\begin{align}
  & \sign \paren { (f^{(k)}(t) - f^{(k)}(t^*)) \weight(t) } \nonumber \\
  & \quad \quad = \sign \paren { f^{(k)}(t) - f^{(k)}(t^*) } \sign(\weight(t)) \nonumber \\
  & \quad \quad = \sign(t - t^*) \cdot S \cdot \sign((x - t)^{k-2}) \sign( t - t^*) & \mbox {by \eqref{eq:sign_wt} and \eqref{eq:sign_deriv_difference}} \nonumber \\
  & \quad \quad = \sign((x - t)^{k-2}) \cdot S \nonumber \\
  & \quad \quad = \sign((x - x_0)^{k-2}) \cdot S \label{eq:sign_rewritten_integrand}
\end{align}
where the last step uses the fact that if $t$ is between $x_0$ and $x$, then $x-t$ has the same sign as $x - x_0$.

To complete the proof, note that for any function $h: \reals \to \reals$, if $\sign(h(t)) = C$ for all $t$ in the closed interval between $x_0$ and $x$, then $\sign(\int_{t=x_0}^x h(t) \dd t) = C \cdot \sign(x - x_0)$.
Thus, using \eqref{eq:sign_rewritten_integrand},
\begin{align}
  \sign \paren { \int_{t=x_0}^x (f^{(k)}(t) - f^{(k)}(t^*)) \weight(t) \dd t }
  & = \sign((x - x_0)^{k-2}) \cdot S \cdot \sign(x - x_0) \nonumber \\
  & = \sign((x - x_0)^{k-1}) \cdot S. \label {eq:sign_integral}
\end{align}
Taking the sign of both sides of \eqref{eq:equiv_integral}, and using \eqref{eq:sign_integral} completes the proof of Claim 1.

\noindent$\blacksquare$

To complete the proof, we take the sign of both sides of \eqref{eq:ratio_deriv_expr_from_lemma} and apply Claim 1:
\begin{align}
  & \sign \paren { \frac{\dd}{\dd x} \frac{R_{k-1}(x; f, x_0)}{(x - x_0)^{k+1}}  } \nonumber \\
  & \quad = \sign \paren { \frac{1}{(x - x_0)^{k+1}} \int_{t=x_0}^x f^{(k)}(t) \weight(t) } & \mbox { by \eqref{eq:ratio_deriv_expr_from_lemma}} \nonumber \\
  & \quad = \sign \paren { \frac{1}{(x - x_0)^{k+1}} } \sign \paren { \int_{t=x_0}^x f^{(k)}(t) \weight(t) }  \nonumber \\
  & \quad = \sign \paren { \frac{1}{(x - x_0)^{k+1}} } \sign((x - x_0)^{k-1}) \cdot S & \mbox { by Claim 1 } \nonumber \\
  & \quad = \sign \paren { (x - x_0)^{-2} } \cdot S \nonumber \\
  & \quad = S.
\end{align}
}
\begin{lemma} \label{lem:monotone_deriv}
\lemmonotonederiv
\end{lemma}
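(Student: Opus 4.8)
The plan is to reduce the claim to a single sign computation: since ``monotonically increasing'' will be established by showing the derivative $\frac{\dd}{\dd x}\frac{R_{k-1}(x; f, x_0)}{(x-x_0)^{k}}$ has constant sign $S$ (with $S=+1$ if $f^{(k)}$ is increasing and $S=-1$ if it is decreasing), it suffices to pin down that sign. The cases $k=0$ (the ratio is literally $f$) and $k=1$ (handled in the main text) are immediate, so I would restrict to $k\ge 2$ and invoke Lemma~\ref{lem:ratio_deriv_integral}, which writes the derivative as $\frac{1}{(x-x_0)^{k+1}}\int_{t=x_0}^{x} f^{(k)}(t)\,\weight(t)\,\dd t$ with $\weight$ as defined there.

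First I would analyze $\weight(t)$. Factoring $(x-t)^{k-2}$ out of $\weight(t)$ leaves an affine factor in $t$ that vanishes exactly at $t^*\eqdef x-\tfrac{k-1}{k}(x-x_0)$, a point strictly between $x_0$ and $x$; this yields $\operatorname{sign}(\weight(t))=\operatorname{sign}((x-t)^{k-2})\,\operatorname{sign}(t-t^*)$ for $t$ between $x_0$ and $x$. The key step is the observation that $\weight$ integrates to zero over $[x_0,x]$ (a one-line antiderivative check), so we may subtract a constant:
\[
  \int_{t=x_0}^{x} f^{(k)}(t)\,\weight(t)\,\dd t=\int_{t=x_0}^{x}\bigl(f^{(k)}(t)-f^{(k)}(t^*)\bigr)\weight(t)\,\dd t .
\]
Now monotonicity of $f^{(k)}$ gives $\operatorname{sign}\bigl(f^{(k)}(t)-f^{(k)}(t^*)\bigr)=S\cdot\operatorname{sign}(t-t^*)$, and multiplying by the sign of $\weight(t)$ makes the two copies of $\operatorname{sign}(t-t^*)$ cancel. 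Since $x-t$ has the same sign as $x-x_0$ throughout the interval of integration, the integrand has constant sign $\operatorname{sign}((x-x_0)^{k-2})\cdot S$. Integrating over an interval ``oriented'' from $x_0$ to $x$ contributes a factor $\operatorname{sign}(x-x_0)$, and the leading $\tfrac{1}{(x-x_0)^{k+1}}$ contributes $\operatorname{sign}((x-x_0)^{k+1})$; the exponents add to $(k-2)+1+(k+1)=2k$, which is even, so the whole expression collapses to $S$. This proves the lemma, and Theorem~\ref{thm:spoly_monotone} then follows by reading off the endpoint infimum and supremum in Definition~\ref{def:spoly} at $x=a$ and $x=b$.

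The main obstacle is regularity bookkeeping rather than algebra. The clean identity $\operatorname{sign}\bigl(f^{(k)}(t)-f^{(k)}(t^*)\bigr)=S\cdot\operatorname{sign}(t-t^*)$ can degenerate on flat stretches of a merely non-strictly monotone $f^{(k)}$, and one must be careful that ``constant sign of the integrand'' genuinely forces the asserted sign of the integral even though $\weight$ and the integrand vanish at isolated points. I would handle this either by first assuming $f^{(k)}$ is differentiable with nowhere-vanishing derivative (hence strictly monotone) and recovering the general case by a limiting argument, or by checking directly that the integrand is everywhere $\ge 0$ (resp.\ $\le 0$) and strictly so on a set of positive measure, which is enough to determine the sign of the integral.
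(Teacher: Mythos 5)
Your proposal is correct and follows essentially the same route as the paper's own proof: the reduction to $k\ge 2$ via Lemma~\ref{lem:ratio_deriv_integral}, the sign factorization of $\weight(t)$ around $t^*=x-\tfrac{k-1}{k}(x-x_0)$, the observation that $\weight$ integrates to zero so $f^{(k)}(t^*)$ can be subtracted, and the final bookkeeping showing the sign powers of $(x-x_0)$ combine to an even exponent. Even your regularity caveat matches the paper, which likewise simplifies by assuming $f^{(k)}$ is differentiable with nowhere-vanishing derivative.
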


\subsubsection{Examples}

We conclude this section by using Theorem~\ref{thm:spoly_monotone} to derive sharp Taylor polynomial enclosures for a handful of elementary functions.

\begin{table*}[h]
        \caption{Sharp degree $k$ Taylor polynomial enclosures for functions with monotone $k$th derivative.}
        \label{tab:spoly_monotone}
  \centering
        \begin{small}
        \begin{sc}
                \begin{tabular}{llll}
    \toprule
          $f$ & Conditions &  $\Istarlep_{k}(f, x_0, [a, b])$ &  $\Istarrep_{k}(f, x_0, [a, b])$ \\
    \midrule

\makecell{$x \mapsto c^x$ \\ ($c > 0$)} & None & $\frac {  c^a - \sum_{i=0}^{k-1} \frac {1} {i!} \ln(c)^i c^{x_0} (a - x_0)^i  } {(a - x_0)^{k}}$ & $\frac {  c^{b} - \sum_{i=0}^{k-1} \frac {1} {i!} \ln(c)^i c^{x_0} (b - x_0)^i  } {(b - x_0)^{k}}$ \\

$\log$ & \makecell[l]{$k=0$,\\$a > 0$} & $\log(a)$ & $\log(b)$ \\

 & \makecell[l]{$k$ odd,\\$a > 0$}& $\frac { \log\paren{\frac{b}{x_0}} - \sum_{i=1}^{k-1} \frac {(-1)^{i+1} (b - x_0)^i } { i  x_0^i }  } {(b - x_0)^{k}}$ & $\frac { \log\paren{\frac{a}{x_0}} - \sum_{i=1}^{k-1} \frac {(-1)^{i+1} (a - x_0)^i} { i  x_0^i }  } {(a - x_0)^{k}}$  \\

 & \makecell[l]{$k$ even,\\$a > 0$}& $\frac { \log\paren{\frac{a}{x_0}} - \sum_{i=1}^{k-1} \frac {(-1)^{i+1} (a - x_0)^i } { i  x_0^i }  } {(a - x_0)^{k}}$ & $\frac { \log\paren{\frac{b}{x_0}} - \sum_{i=1}^{k-1} \frac {(-1)^{i+1} (b - x_0)^i} { i  x_0^i }  } {(b - x_0)^{k}}$  \\

$\mathrm{abs}$ & $k=1$ & $\frac{|a| - |x_0|} {a - x_0}$ & $\frac{|b| - |x_0|} {b - x_0}$ \\

\makecell{$x \mapsto x^c$\\$(c \in \integers)$} & \makecell[l]{$k < c$,\\ $c - k$ odd \\or $a \ge 0$} & $\frac {a^c - \sum_{i=0}^{k-1} {c \choose i} x_0^{c-i} (a-x_0)^i} {(a - x_0)^{k}}$ & $\frac {b^c - \sum_{i=0}^{k-1} {c \choose i} x_0^{c-i} (b-x_0)^i} {(b - x_0)^{k}}$ \\

&\makecell[l]{$k < c$,\\ $c - k$ even,\\ $b \le 0$}  &  $\frac {b^c - \sum_{i=0}^{k-1} {c \choose i} x_0^{c-i} (b-x_0)^i} {(b - x_0)^{k}}$ &  $\frac {a^c - \sum_{i=0}^{k-1} {c \choose i} x_0^{c-i} (a-x_0)^i} {(a - x_0)^{k}}$ \\

\bottomrule
  \end{tabular}
  \end{sc}
  \end{small}
\end{table*}

For $c > 0$, the function $f(x) = c^x$ has $k$th derivative $f^{(k)}(x) = \ln(c)^k c^x$, which is monotonically increasing for all $x$.
Thus, by Theorem~\ref{thm:spoly_monotone},
\begin{align}
  & \Istar_{k}(f, x_0, [a, b]) \nonumber \\
  & \quad = \left [ \frac {  c^a - \sum_{i=0}^{k-1} \frac {1} {i!} \ln(c)^i c^{x_0} (a - x_0)^i  } {(a - x_0)^{k}},  \frac {  c^b - \sum_{i=0}^{k-1} \frac {1} {i!} \ln(c)^i c^{x_0} (b - x_0)^i  } {(b - x_0)^{k}} \right ].
\end{align}

Theorem~\ref{thm:spoly_monotone} can similarly be applied to the $\log$ function, the absolute value function and the function $x \mapsto x^c$, as summarized in Table~\ref{tab:spoly_monotone}.

Of course the theorem can also be applied to more complex functions.  For example, the function $f(x) = \frac 3 2 \exp(3 x) - 25 x^2$ has a monotonically increasing second derivative, and Theorem~\ref{thm:spoly_monotone} can therefore be used to derive the sharp quadratic Taylor polynomial enclosure, illustrated in Figure~\ref{fig:example_enclosure}.  The theorem can also be applied to the $\sin$ and $\cos$ functions so long as the trust region does not contain any integer multiples of $\frac \pi 2$ (which guarantees that the $k$th derivative is monotone over the trust region).

In addition, Theorem~\ref{thm:spoly_monotone} yields as collaries a large number of mathematical inequalities of the type often derived on a one-off basis for use in proofs.
Table~\ref{tab:spoly_monotone_corollaries} gives a handful of examples, each of which appeared on a recently-published cheat sheet of useful mathematical inequalities \cite{kozma2023useful}.

\begin{table*}[h]
        \caption{Some commonly-used mathematical inequalities \cite{kozma2023useful} that can be derived as corollaries of Theorem~\ref{thm:spoly_monotone}.}
        \label{tab:spoly_monotone_corollaries}
  \centering
        \begin{small}
        \begin{sc}
                \begin{tabular}{llll}
    \toprule
          Inequality & Valid for \\
    \midrule

  $1 - x \ln(2) \le 2^{-x} \le 1 - \frac x 2$ & $x \in [0, 1]$ \\

  $e^{-x} \le 1 - \frac x 2$ & $x \in [0, 1.59]$ \\

  $e^x \le 1 + x + x^2$ & $x \le 1.79$ \\

  $x - \frac{x^2}{2} \le \ln(1+x)$ & $x \ge 0$ \\

  $x - x^2 \le \ln(1+x)$ & $x \ge -.68$ \\

  $\frac{x+1}{2} - \frac{(x-1)^2}{2} \le \sqrt{x} \le  \frac{x+1}{2} - \frac{(x-1)^2}{8}$ & $x \in [0, 1]$ \\ 

\bottomrule
  \end{tabular}
  \end{sc}
  \end{small}
\end{table*}

\ignore{
\subsubsection{Examples}

We conclude this section by using Theorem~\ref{thm:spoly_monotone} to derive sharp Taylor polynomial enclosures for a few functions of interest.

The $\exp$ function has $k$th derivative $\exp^{(k)}(x) = \exp(x)$, which is monotonically increasing for all $x$.  Thus, by Theorem~\ref{thm:spoly_monotone},
\begin{align}
  \Istar_{k}(\exp, x_0, [a, b])
  & = \left [ \frac {R_{k-1}(a; \exp, x_0)} {(a - x_0)^{k}},  \frac {R_{k-1}(b; \exp, x_0)} {(b - x_0)^{k}} \right ] \nonumber \\
  & = \left [ \frac {  \exp(a) - \sum_{i=0}^{k-1} \frac {1} {i!} \exp(x_0) (a - x_0)^i  } {(a - x_0)^{k}},  \frac {  \exp(b) - \sum_{i=0}^{k-1} \frac {1} {i!} \exp(x_0) (b - x_0)^i  } {(b - x_0)^{k}} \right ].
\end{align}

The $\log$ function has $k$th derivative $\log^{(k)}(x) = (-1)^{k-1} \frac{(k-1)!} {x^{k}}$.  Thus, for $x > 0$, $\log^{(k)}$ is monotonically decreasing if $k$ is odd, and monotonically increasing if $k$ is even.  Theorem~\ref{thm:spoly_monotone} thus gives a sharp Taylor polynomial enclosure at $x_0$ over $[a, b]$ for any $x_0, a > 0$.

The function $f(x) = |x|$ has first derivative $f^{(1)}(x) = \sign(x)$, which is monotonically increasing for all $x$.  Thus, taking $k = 1$, the sharp degree 1 Taylor polynomial enclosure is defined by an interval whose end points are $\frac{\sign(a) - \sign(x_0)} {a - x_0}$ and $\frac{\sign(b) - \sign(x_0)} {b - x_0}$.

Finally, consider the function $f(x) = x^p$, for some integer $p$.  The $k$th derivative is $f^{(k)}(x) = x^{p-k} \prod_{i=0}^{k-1} (p-i)$.
If $p - k$ is non-negative and odd, then $x^{p-k}$ is increasing for all $x$, and $\prod_{i=0}^{k} (p-i)$ is positive, so $f^{(k)}$ is increasing for all $x$.
If $p - k$ is non-negative and even, then $x^{p-k}$ is increasing for $x \ge 0$, and decreasing for $x \le 0$, and $\prod_{i=0}^{k-1} (p-i)$ is positive, so $f^{(k)}$ is increasing for $x \ge 0$ and decreasing for $x \le 0$.


Table~\ref{tab:spoly_monotone} summarizes these results.

\begin{table*}[h]
        \caption{Sharp degree $k$ Taylor polynomial enclosures for functions with monotone $k$th derivative.}
        \label{tab:spoly_monotone}
  \centering
        \begin{small}
        \begin{sc}
                \begin{tabular}{llll}
    \toprule
          Function $f$ & Conditions &  $\lep{\Istar_{k}}(f, x_0, [a, b])$ & $\rep{\Istar_{k}}(f, x_0, [a, b])$ \\
    \midrule

$\exp$ & None & $ \frac {  \exp(a) - \sum_{i=0}^{k-1} \frac {1} {i!} \exp(x_0) (a - x_0)^i  } {(a - x_0)^{k}}$ & $\frac {  \exp(b) - \sum_{i=0}^{k-1} \frac {1} {i!} \exp(x_0) (b - x_0)^i  } {(b - x_0)^{k}}$ \\

$\log$ & $a \ge 0$, $k=0$ & $\log(a)$ & $\log(b)$ \\

 & $a \ge 0$, $k$ odd & $\frac { \log(b) - \log(x_0) - \sum_{i=1}^{k-1} \frac {(-1)^{i+1}} { i  b^i }  (b - x_0)^i } {(b - x_0)^{k}}$  &  $\frac { \log(a) - \log(x_0) - \sum_{i=1}^{k-1} \frac {(-1)^{i+1}} { i  a^i }  (a - x_0)^i } {(a - x_0)^{k}}$  \\

 & $a \ge 0$, $k$ even &  $\frac { \log(a) - \log(x_0) - \sum_{i=1}^{k-1} \frac {(-1)^{i+1}} { i  a^i }  (a - x_0)^i } {(a - x_0)^{k}}$ & $\frac { \log(b) - \log(x_0) - \sum_{i=1}^{k-1} \frac {(-1)^{i+1}} { i  b^i }  (b - x_0)^i } {(b - x_0)^{k}}$ \\

$x \mapsto |x|$ & $k=1$ & $\frac{\sign(a) - \sign(x_0)} {a - x_0}$ & $\frac{\sign(b) - \sign(x_0)} {b - x_0}$ \\

$x \mapsto x^p$ for $p \in \integers$ & \makecell{$d \le p - 1$,\\ $p - k$ odd or $a \ge 0$} & $\frac {a^p - \sum_{i=0}^{k-1} \frac{1}{i!} a^{p-i} \prod_{j=0}^{i-1} p-j} {(a - x_0)^{k}}$ & $\frac {b^p - \sum_{i=0}^{k-1} \frac{1}{i!} b^{p-i} \prod_{j=0}^{i-1} p-j} {(b - x_0)^{k}}$ \\

 & \makecell{$k \le p - 1$,\\ $p - k$ even, $b \le 0$}  &  $\frac {b^p - \sum_{i=0}^{k-1} \frac{1}{i!} b^{p-i} \prod_{j=0}^{i-1} p-j} {(b - x_0)^{k}}$ &  $\frac {a^p - \sum_{i=0}^{k-1} \frac{1}{i!} a^{p-i} \prod_{j=0}^{i-1} p-j} {(a - x_0)^{k}}$ \\

\bottomrule
  \end{tabular}
  \end{sc}
  \end{small}
\end{table*}
}

\subsection{Functions with Even-Symmetric Hessian} \label{sec:even_symmetric}

Many commonly-used neural network activation functions, such as $\mathrm{ReLU}$ \cite{nair2010rectified}, $\mathrm{GELU}$ \cite{hendrycks2016gaussian}, SiLU (a.k..a.\ Swish) \cite{elfwing2018sigmoid,ramachandran2017searching} and $\mathrm{Softplus}$, have Hessians that are even-symmetric (meaning $f''(x) = f''(-x)\ \forall x \in \reals$).  In this section we derive Taylor polynomial enclosures for functions with even symmetric Hessians.  Unlike the results in the previous section, here we will only consider quadratic Taylor polynomial enclosures ($k = 2$), which are arguably the most relevant in a neural network optimization context.

Our main result is the following theorem.

\newcommand{\thmevensymmetric}{
Let $f: \reals \to \reals$ be a twice-differentiable function, and suppose for some $\alpha > 0$, $f''$ is even symmetric over $[-\alpha, \alpha]$ (meaning $f''(x) = f''(-x)\ \forall x \in [-\alpha, \alpha]$), and $f''$ is decreasing over $[0, \alpha]$.  Then, the sharp quadratic Taylor polynomial enclosure of $f$ over $[a, b] \subseteq [-\alpha, \alpha]$ at $x_0 \in (a, b)$  is given by:
\[
  \Istar_2(f, x_0, [a, b]) = \left [ \min \set { \frac {  R_1(a; f, x_0)  } {(a - x_0)^2}, \frac {R_1(b; f, x_0)} {(b - x_0)^2} } , \frac { R_1(c; f, x_0) } {(c - x_0)^2} \right ]
\]
where $c \eqdef \min \set { b, \max \set { -x_0, a} }$.
}
\newcommand{\thmevensymmetricproof}{
Consider some fixed $x \in [a, b] \setminus \set{x_0}$, and define
\begin{equation}
  \weight(t) \eqdef 2 t - x - x_0.
\end{equation}
By Lemma~\ref{lem:ratio_deriv_integral},
\begin{equation} \label{eq:ratio_deriv}
\frac{\dd}{\dd x} \frac {R_1(x; f, x_0)} {(x - x_0)^2}
  = \frac {1} {(x - x_0)^3} \int_{t=x_0}^x f''(t) \weight(t) \dd t.
\end{equation}

The bulk of the proof is contained in the following claim.

\noindent\emph{Claim 1.}
\[
  \sign \paren { \int_{t=x_0}^x f''(t) \weight(t) \dd t } = - \sign (x + x_0) \sign (x - x_0).
\]
\noindent\emph{Proof of Claim 1.}
First, observe that for
\begin{equation}
  t^* \eqdef \frac{x + x_0}{2}
\end{equation}
we have
\begin{equation} \label{eq:w_anti_symmetry}
   \weight(t^* + \delta) =  2 \delta = - \weight(t^* - \delta) \quad \forall \delta \in \reals.
\end{equation}
We now use \eqref{eq:w_anti_symmetry} to rewrite the integral on the right hand side of \eqref{eq:ratio_deriv}:
\begin{align}
  & \int_{t=x_0}^x f''(t) \weight(t) \dd t \nonumber \\
  & \quad \quad = \int_{t=x_0}^{t^*} f''(t) \weight(t) \dd t + \int_{t=t^*}^{x} f''(t) \weight(t) \dd t \nonumber \\
  & \quad \quad = \int_{t=x_0}^{t^*} f''(t) \weight(t) \dd t + \int_{\delta=0}^{x - t^*} f''(t^* + \delta) \weight(t^* + \delta) \dd \delta &
(\delta \eqdef t - t^*)
\nonumber \\
  & \quad \quad = \int_{\delta'=0}^{t^* - x_0} f''(t^* - \delta') \weight(t^* - \delta') \dd \delta' + \int_{\delta=0}^{x - t^*} f''(t^* + \delta) \weight(t^* + \delta) \dd \delta &
(\delta' \eqdef t^* - t)
\nonumber \\
  & \quad \quad = \int_{\delta'=0}^{t^* - x_0} - f''(t^* - \delta') \weight(t^* + \delta') \dd \delta' + \int_{\delta=0}^{x - t^*} f''(t^* + \delta) \weight(t^* + \delta) \dd \delta & \mbox { by } \eqref{eq:w_anti_symmetry} \nonumber \\
  & \quad \quad = \int_{\delta=0}^{t^* - x_0} - f''(t^* - \delta) \weight(t^* + \delta) \dd \delta + \int_{\delta=0}^{t^* - x_0} f''(t^* + \delta) \weight(t^* + \delta) \dd \delta \nonumber \\
  & \quad \quad = \int_{\delta=0}^{t^* - x_0} \weight(t^* + \delta) ( f''(t^* + \delta) - f''(t^* - \delta) ) \dd \delta. \label{eq:rewrite_integral}
\end{align}
It is easy to verify that $t^* + \delta \in [a, b]$ and $t^* - \delta \in [a, b]$.  Therefore, by Lemma~\ref{lem:even_symmetric_sign},
\begin{equation} \label{eq:sign_difference}
  \sign(f''(t^* + \delta) - f''(t^* - \delta)) = - \sign(t^*) \sign(\delta).
\end{equation}

We are now ready to analyze the sign of the integral on the right hand side of \eqref{eq:rewrite_integral}.  First, we show that the integrand has a constant sign (independent of $\delta$):
\begin{align}
  & \sign \paren { \weight(t^* + \delta) ( f''(t^* + \delta) - f''(t^* - \delta) ) } \nonumber \\
  & \quad \quad = \sign\paren{\weight(t^* + \delta)} \sign\paren{f''(t^* + \delta) - f''(t^* - \delta)} \nonumber \\
  & \quad \quad = \sign(2 \delta) \sign\paren{f''(t^* + \delta) - f''(t^* - \delta)} \nonumber \\
  & \quad \quad = - \sign(2 \delta) \sign(t^*) \sign(\delta) &\mbox { by } \eqref{eq:sign_difference} \nonumber \\
  & \quad \quad = - \sign(t^*) \nonumber \\
  & \quad \quad = -\sign(x + x_0). \label{eq:sign_integrand}
\end{align}

To complete the proof, note that for any function $h: \reals \to \reals$, if $\sign(h(t)) = C$ for all $t$ in the closed interval between $0$ and $t^* - x_0$, then $\sign(\int_{t=x_0}^x h(t) \dd t) = C \cdot \sign(t^* - x_0)$.  Thus, using \eqref{eq:sign_integrand}, we have
\begin{align}
  \sign \paren {   \int_{\delta=0}^{t^* - x_0} \weight(t^* + \delta) ( f''(t^* + \delta) - f''(t^* - \delta) ) \dd \delta }
  & = - \sign(x + x_0) \sign(t^* - x_0) \nonumber \\
  & = - \sign(x + x_0) \sign (x - x_0). \label{eq:sign_rewritten_integral}
\end{align}
Combining \eqref{eq:rewrite_integral} and \eqref{eq:sign_rewritten_integral} completes the proof of Claim 1.

\noindent$\blacksquare$

To complete the proof, we use \eqref{eq:ratio_deriv} and Claim 1 to show that for any $x \in [a, b] \setminus \set { x_0 }$:
\begin{align} 
\sign\paren{\frac{\dd}{\dd x} \frac {R_1(x; f, x_0)} {(x - x_0)^2}}
& = \sign \paren { \frac {1} {(x - x_0)^3} \int_{t=x_0}^x f''(t) \weight(t) \dd t } & \mbox { by } \eqref{eq:ratio_deriv} \nonumber \\
& = \sign \paren { \frac {1} {(x - x_0)^3 } } \sign \paren { \int_{t=x_0}^x f''(t) \weight(t) \dd t } \nonumber \\
& = - \sign \paren { \frac {1} {(x - x_0)^3 } } \sign(x + x_0) \sign (x - x_0) & \mbox { by Claim 1} \nonumber \\
& = -\sign(x + x_0) \label{eq:sign_ratio_deriv}.
\end{align}
It follows from \eqref{eq:sign_ratio_deriv} that the function $r(x) \eqdef \frac {R_1(x; f, x_0)} {(x - x_0)^2}$ is increasing for $x < -x_0$, decreasing for $x > -x_0$, and locally maximized at $x = x_0$.  Thus, over the interval $[a, b]$, $r(x)$ is maximized at the point $x = \min \set{b, \max \set {-x_0, a }}$, and minimized at either $x = a$ or $x = b$.  Using the definition of $\Istar_2(f, x_0, [a, b])$ then proves the lemma.
}
\begin{theorem} \label{thm:even_symmetric}
\thmevensymmetric
\end{theorem}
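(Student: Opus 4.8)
The plan is to reduce the whole statement to understanding the shape of the single function $r(x) \eqdef R_1(x; f, x_0)/(x - x_0)^2$ on $[a,b] \setminus \set{x_0}$, since by Definition~\ref{def:spoly} the two endpoints of $\Istar_2(f, x_0, [a,b])$ are exactly $\inf r$ and $\sup r$ over that set. First I would note that $r$ extends continuously to $x_0$ with value $\frac{1}{2} f''(x_0)$ (Taylor's theorem), so that it suffices to find the extrema of a function continuous on the compact interval $[a,b]$ and differentiable away from $x_0$.

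The engine is Lemma~\ref{lem:ratio_deriv_integral} with $k = 2$, where $\weight(t) = (x - x_0) - 2(x - t) = 2t - x - x_0$, giving $r'(x) = (x - x_0)^{-3} \int_{t = x_0}^x f''(t)\,\weight(t)\,\dd t$. The heart of the proof is the sign identity
\[
  \sign\paren{ \int_{t = x_0}^x f''(t)\,\weight(t)\,\dd t } = -\sign(x + x_0)\,\sign(x - x_0),
\]
after which cancelling $(x - x_0)^{-3}$ yields $\sign(r'(x)) = -\sign(x + x_0)$ for every $x \in [a,b] \setminus \set{x_0}$. To prove the identity I would use that $\weight$ is \emph{anti-symmetric} about the midpoint $t^* \eqdef \frac{1}{2}(x + x_0)$ of the interval of integration: $\weight(t^* + \delta) = 2\delta = -\weight(t^* - \delta)$. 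Splitting $\int_{x_0}^x = \int_{x_0}^{t^*} + \int_{t^*}^x$, reflecting the first half through $t^*$, and recombining turns the integral into $\int_{\delta = 0}^{t^* - x_0} \weight(t^* + \delta)\,\big(f''(t^* + \delta) - f''(t^* - \delta)\big)\,\dd\delta$. Since $f''$ is even symmetric and decreasing on $[0,\alpha]$, it depends only on $|\cdot|$ and is non-increasing in it, which gives a short sign sublemma $\sign\big(f''(t^* + \delta) - f''(t^* - \delta)\big) = -\sign(t^*)\,\sign(\delta)$ (the analogue of the even-symmetric sign lemma used elsewhere). Multiplying by $\sign(\weight(t^* + \delta)) = \sign(\delta)$, the integrand has the constant sign $-\sign(t^*) = -\sign(x + x_0)$, and integrating it over an interval of signed length $\sign(t^* - x_0) = \sign(x - x_0)$ delivers the identity.

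Finally, from $\sign(r'(x)) = -\sign(x + x_0)$ we read off that $r$ is increasing for $x < -x_0$ and decreasing for $x > -x_0$ (this passes consistently through $x_0$ when $x_0 \neq 0$, and the $x_0 = 0$ case is handled by the continuous extension). Hence on $[a,b]$ the function $r$ attains its maximum at $c = \min\set{b, \max\set{-x_0, a}}$, which is $-x_0$ clipped into $[a,b]$, and its minimum at one of the endpoints $a$ or $b$; substituting these into Definition~\ref{def:spoly} produces exactly the stated interval. The main obstacle is the sign identity for the integral: the symmetrization about $t^*$ is the key idea, and the rest is careful bookkeeping of signs — of $(x - x_0)^3$, of the direction of integration when $x < x_0$, and of $t^*$ relative to $0$ — which is not conceptually hard once the anti-symmetry of $\weight$ has been spotted.
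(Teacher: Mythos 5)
Your proposal is correct and follows essentially the same route as the paper's proof: reduce to the shape of $r(x) = R_1(x; f, x_0)/(x-x_0)^2$, apply Lemma~\ref{lem:ratio_deriv_integral} with $k=2$, symmetrize the integral about $t^* = \tfrac{1}{2}(x+x_0)$ using the anti-symmetry of $\weight$, and invoke the even-symmetric sign fact (the paper's Lemma~\ref{lem:even_symmetric_sign}) to conclude $\sign(r'(x)) = -\sign(x+x_0)$ and hence the stated maximizer $c$ and endpoint minimizer. No gaps worth noting beyond the routine check that $t^* \pm \delta$ stay in $[a,b]$, which the paper also treats as immediate.
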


To prove Theorem~\ref{thm:even_symmetric}, we analyze the minimum and maximum possible values of the ratio $r(x) \eqdef \frac {R_1(x; f, x_0)} {(x - x_0)^2}$ for $x \in [a, b]$, which by definition are the end points of the interval $\Istar_2(f, x_0, [a, b])$.  To do so, we first use Lemma~\ref{lem:ratio_deriv_integral} to write $r'(x)$ in terms of a weighted integral of $f''(t)$, and then carefully analyze the sign of the resulting expression in order to show that $r(x)$ is maximized at $x = -x_0$, decreasing for $x > -x_0$, and increasing for $x < -x_0$.  This implies that the minimum value of $r(x)$ for $x \in [a, b]$ is either $r(a)$ or $r(b)$ (whichever is smaller), while the maximum value is $r(c)$, where $c$ is the point in $[a, b]$ that is closest to $-x_0$, yielding the expression given in Theorem~\ref{thm:even_symmetric}.  The full proof is given in Appendix A.


Figure~\ref{fig:activation_functions} plots a number of commonly-used activation functions whose Hessian is even symmetric.

For the $\mathrm{Hard\ SiLU}$, $\mathrm{Leaky ReLU}$, $\mathrm{ReLU}$, and $\mathrm{Softplus}$ functions, the Hessian is monotonically decreasing for $x \ge 0$, and Theorem~\ref{thm:even_symmetric} can be used to derive a sharp quadratic Taylor polynomial enclosure for any trust region $[a, b]$.  For $\mathrm{GELU}$ and $\mathrm{SiLU}$ (a.k.a.\ $\mathrm{Swish}$), Theorem~\ref{thm:even_symmetric} can be used so long as the trust region $[a, b]$ does not include the regions far from the origin in which the Hessian is non-monotonic (for trust regions not satisfying this condition, bounds on the Hessian can be used to derive a non-sharp quadratic Taylor polynomial enclosure, as discussed in \S\ref{sec:spoly_baseline}).

\begin{figure}[h]
\begin{center}
\includegraphics[width=\sidebysidefigwidth]{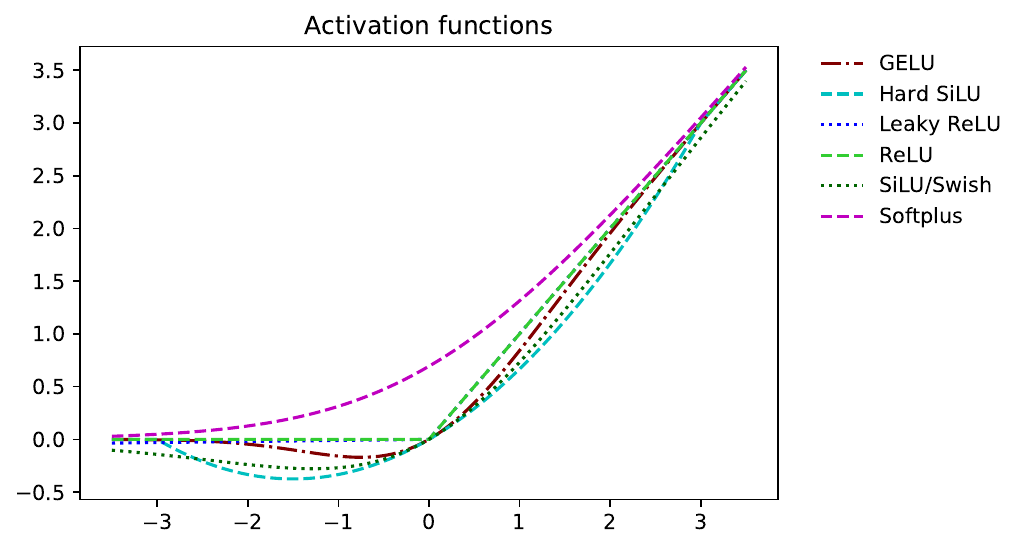}
\includegraphics[width=\sidebysidefigwidth]{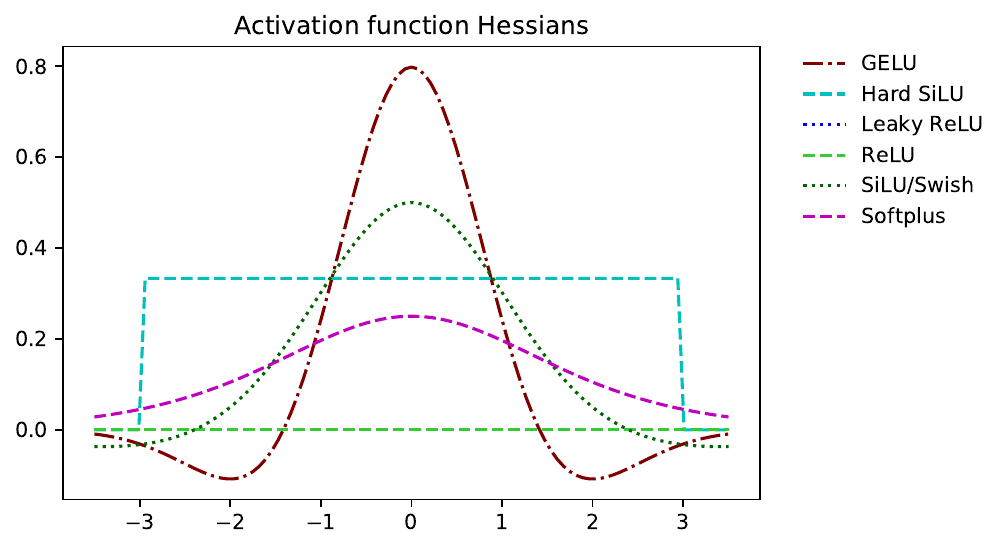}
\caption{Some commonly-used neural network activation functions (left) and their Hessians (right).}
\label{fig:activation_functions}
\end{center}
\end{figure}

\ignore{
\newcommand{\cee}{c}
\newcommand{\indicator}{\mathbb{I}}
\begin{table*}[h]
        \caption{Sharp quadratic Taylor polynomial enclosures for functions with even-symmetric Hessians.}
        \label{tab:spoly_even_symmetric}
  \centering
        \begin{small}
        \begin{sc}
                \begin{tabular}{llll}
    \toprule
          Activation Function $f$ & $r(x) \eqdef \frac{R_1(x; f, x_0)}{(x - x_0)^2}$ & $\lep{\Istar_2(f, x_0, [a, b])}$ & $\rep{\Istar_2(f, x_0, [a, b])}$  \\
    \midrule





\makecell[l]{Leaky ReLU \\ $(x \mapsto \max \set{0, x})$}
  & $\frac{ \ldots }{(x - x_0)^2}$
  & $\min\set{r(a),r(b)}$
  & $r(\min\set{b, \max\set{-x_0, a}})$ \\

\makecell[l]{ReLU \\ $(x \mapsto \max \set{0, x} + .01 \min\set{0, x})$}
  & $\frac{ \max\set{0, x} - \max\set{0, x_0} - \indicator[x_0 \ge 0] (x - x_0) }{(x - x_0)^2}$
  & $\min\set{r(a),r(b)}$
  & $r(\min\set{b, \max\set{-x_0, a}})$ \\

\makecell[l]{Softplus \\ ($x \mapsto \log(1+\exp(x))$)} 
  & $\frac{\log\paren{\frac{1+\exp(x)}{1+\exp(x_0)}} } {(x - x_0)^2}$
  & $\min\set{r(a),r(b)}$
  & $r(\min\set{b, \max\set{-x_0, a}})$ \\


\bottomrule
  \end{tabular}
  \end{sc}
  \end{small}
\end{table*}
}

\section{Improvement over Classical Baseline} \label{sec:comparison}

How much better are sharp Taylor polynomial enclosures than the non-sharp Taylor polynomial enclosures discussed in \S\ref{sec:spoly_baseline}, obtained via bounds on the range of the $k$th derivative?

In general, the answer to this question depends on the function being bounded.  However, as the size of the trust region approaches zero, it turns out that the interval defining the sharp Taylor polynomial enclosure is always at least $k+1$ times narrower than an interval based on the range of the $k$th derivative, except in the trivial case where the function being bounded is a polynomial of degree $k$ or smaller (in which case both intervals have zero width).  This is established in the following theorem, whose proof is given in Appendix~\ref{sec:proofs}.

\newcommand{\thmsharpadvantage}{
For any integer $k > 0$, any analytic function $f: \reals \to \reals$ that is not a polynomial of degree $k$ or smaller, and any $x_0 \in \reals$,
\[
  \lim_{\epsilon \rightarrow 0} \frac { \wid(\Idb_{k}(f, [x_0, x_0 + \epsilon]) ) } { \wid(\Istar_{k}(f, x_0, [x_0, x_0 + \epsilon] ) )} = {k + \ell \choose \ell}
\]
where $\Istar_k$ is the interval defining the sharp Taylor polynomial enclosure (see Definition~\ref{def:spoly}),
\[
  \Idb_{k}(f, [a, b]) \eqdef \frac {1} {k!} \left [\inf_{y \in [a, b]} \set { f^{(k)}(y) }, \sup_{y \in [a, b]} \set { f^{(k)}(y) } \right ],
\]
$\ell \ge 1$ is the smallest positive integer such that $f^{(k+\ell)}(x_0) \neq 0$,
and for any interval $\gi = [\gilep, \girep]$, we define $\wid(\gi) \eqdef \girep - \gilep$.

Furthermore, both $\Idb_{k}(f, [x_0, x_0 + \epsilon])$ and $\Istar_{k}(f, x_0, [x_0, x_0 + \epsilon])$ have width $O(\epsilon^\ell)$ (as $\epsilon \to 0$).
}
\newcommand{\thmsharpadvantageproof}{
Because $f$ is analytic,
\begin{equation}
  f(x) = \sum_{i=0}^\infty \frac {1} {i!} f^{(i)}(x_0) (x - x_0)^i.
\end{equation}
Therefore,
\begin{equation} \label{eq:f_k}
  f^{(k)}(x) = \sum_{i=k}^\infty \frac {1} {(i-k)!} f^{(i)}(x_0) (x - x_0)^{i-k}.
\end{equation}
For sufficiently small $\epsilon$, $f^{(k)}$ must be either monotonically increasing or monotonically decreasing over the interval $[x_0, x_0 + \epsilon]$.  Assume without loss of generality that $f^{(k)}$ is monotonically increasing over $[x_0, x_0 + \epsilon]$.  Under this assumption,
\begin{equation}
  \inf_{y \in [a, b]} \set { f^{(k)}(y) } = f^{(k)}(x_0)
\end{equation}
and
\begin{align}
  \sup_{y \in [a, b]} \set { f^{(k)}(y) }
  & = f^{(k)}(x_0 + \epsilon) \nonumber \\
  & =  f^{(k)}(x_0) + \sum_{i=k+1}^\infty \frac {1} {(i-k)!} f^{(i)}(x_0) \epsilon^{i-k} & \mbox{by } \eqref{eq:f_k} \nonumber \\
  & =  f^{(k)}(x_0) + \sum_{i=k+\ell}^\infty \frac {1} {(i-k)!} f^{(i)}(x_0) \epsilon^{i-k} & \mbox {by definition of $\ell$}.
\end{align}
Therefore,
\begin{equation} \label{eq:Idb_width}
   \wid( \Idb_{k}(f, [x_0, x_0 + \epsilon]) ) = \frac {1} {k!} \sum_{i=k+\ell}^\infty \frac {1} {(i-k)!} f^{(i)}(x_0) \epsilon^{i-k}.
\end{equation}
Dividing both sides of \eqref{eq:Idb_width} by $\epsilon^\ell$, and taking the limit as $\epsilon \to 0$, we have
\begin{equation} \label{eq:derivative_bound_ratio}
  \lim_{\epsilon \rightarrow 0} \frac { \wid( \Idb_{k}(f, [x_0, x_0 + \epsilon]) ) } {\epsilon^\ell} = \frac {1} {k! \ell!} f^{(k+\ell)}(x_0).
\end{equation}
At the same time, by Corollary~\ref{cor:spoly_monotone},
\begin{equation}
  \Istar_{k}(f, x_0, [x_0, x_0 + \epsilon]) = \left [ \frac {1} {k!} f^{(k)}(x_0), \frac { R_{k-1}(x_0 + \epsilon; f, x_0) } { \epsilon^{k} }  \right ].
\end{equation}
Therefore,
\begin{align} \label{eq:Istar_width}
  \wid( \Istar_{k}(f, x_0, [x_0, x_0 + \epsilon]) )
  & = \frac { R_{k-1}(x_0 + \epsilon; f, x_0) } { \epsilon^{k} } - \frac {1} {k!} f^{(k)}(x_0) \nonumber \\
  & = \paren { \sum_{i=k}^\infty \frac {1} {i!} f^{(i)}(x_0) \epsilon^{i-k} } - \frac {1} {k!} f^{(k)}(x_0) \nonumber \\
  & = \sum_{i=k+1}^\infty \frac {1} {i!} f^{(i)}(x_0) \epsilon^{i-k} \nonumber \\
  & = \sum_{i=k+\ell}^\infty \frac {1} {i!} f^{(i)}(x_0) \epsilon^{i-k}. \\
\end{align}
Dividing both sides of \eqref{eq:Istar_width} by $\epsilon^\ell$, and taking the limit as $\epsilon \to 0$, we have
\begin{equation} \label{eq:sharp_ratio}
   \lim_{\epsilon \rightarrow 0} \frac { \wid( \Istar_{k}(f, x_0, [x_0, x_0 + \epsilon]) ) } {\epsilon^\ell} = \frac {1} {(k+\ell)!} f^{(k+\ell)}(x_0).
\end{equation}
By definition of $\ell$, the right hand side of \eqref{eq:sharp_ratio} is nonzero.  Dividing \eqref{eq:derivative_bound_ratio} by \eqref{eq:sharp_ratio} then proves the first part of the theorem.

The second part of the theorem (that both intervals have width $O(\epsilon^\ell)$ as $\epsilon \to 0$) follows immediately from \eqref{eq:derivative_bound_ratio} and \eqref{eq:sharp_ratio}.
}
\begin{theorem} \label{thm:sharp_advantage}
\thmsharpadvantage
\end{theorem}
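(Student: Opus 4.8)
The plan is to write both interval widths as explicit power series in $\epsilon$ and read off their leading-order behavior. Since $f$ is analytic, $f(x) = \sum_{i=0}^\infty \frac{1}{i!} f^{(i)}(x_0)(x-x_0)^i$ and hence $f^{(k)}(x) = \sum_{i=k}^\infty \frac{1}{(i-k)!} f^{(i)}(x_0)(x-x_0)^{i-k}$; because $f$ is not a polynomial of degree $\le k$, the integer $\ell$ is well defined. A short preliminary step establishes that $f^{(k)}$ is monotone on $[x_0, x_0+\epsilon]$ for all sufficiently small $\epsilon$: differentiating once more, $f^{(k+1)}(x)$ equals $\frac{1}{(\ell-1)!} f^{(k+\ell)}(x_0)(x-x_0)^{\ell-1}$ plus higher-order terms, which has constant sign for $x$ slightly greater than $x_0$. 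Assume without loss of generality that $f^{(k)}$ is increasing there.

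For the classical interval, monotonicity of $f^{(k)}$ gives $\Idb_k(f,[x_0,x_0+\epsilon]) = \frac{1}{k!}[f^{(k)}(x_0),\, f^{(k)}(x_0+\epsilon)]$, so $\wid(\Idb_k(f,[x_0,x_0+\epsilon])) = \frac{1}{k!}\big(f^{(k)}(x_0+\epsilon) - f^{(k)}(x_0)\big) = \frac{1}{k!}\sum_{i\ge k+1} \frac{1}{(i-k)!} f^{(i)}(x_0)\epsilon^{i-k}$. By definition of $\ell$ the terms $i = k+1,\dots,k+\ell-1$ vanish, so dividing by $\epsilon^\ell$ and letting $\epsilon \to 0$ yields $\frac{1}{k!\,\ell!} f^{(k+\ell)}(x_0)$.

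For the sharp interval, I would invoke the monotonicity conclusion underlying Theorem~\ref{thm:spoly_monotone} --- more precisely, the argument of Lemma~\ref{lem:monotone_deriv} shows that $x \mapsto R_{k-1}(x;f,x_0)/(x-x_0)^k$ is increasing for $x > x_0$ even when $x_0$ is the left endpoint of the trust region. Its supremum over $(x_0, x_0+\epsilon]$ is therefore attained at $x_0+\epsilon$, and its infimum is the right-hand limit at $x_0$, which equals $\frac{1}{k!} f^{(k)}(x_0)$ by Taylor's theorem. Expanding $R_{k-1}(x_0+\epsilon; f, x_0) = \sum_{i\ge k} \frac{1}{i!} f^{(i)}(x_0)\epsilon^i$ gives $\wid(\Istar_k(f,x_0,[x_0,x_0+\epsilon])) = \frac{R_{k-1}(x_0+\epsilon;f,x_0)}{\epsilon^k} - \frac{1}{k!}f^{(k)}(x_0) = \sum_{i \ge k+1} \frac{1}{i!} f^{(i)}(x_0)\epsilon^{i-k} = \sum_{i\ge k+\ell} \frac{1}{i!} f^{(i)}(x_0)\epsilon^{i-k}$, so dividing by $\epsilon^\ell$ and letting $\epsilon\to0$ yields $\frac{1}{(k+\ell)!} f^{(k+\ell)}(x_0)$, which is nonzero by the choice of $\ell$. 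Dividing the two limits then gives $\frac{(k+\ell)!}{k!\,\ell!} = {k+\ell \choose \ell}$, and the claim that both widths are $O(\epsilon^\ell)$ falls out of the two limit computations.

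I expect the main obstacle to be the mild mismatch between the hypotheses of Theorem~\ref{thm:spoly_monotone} (which assumes $x_0$ interior) and the configuration here, where $x_0 = a$ is an endpoint of $[x_0, x_0 + \epsilon]$: one must check that the remainder-ratio is still monotone on $(x_0, x_0+\epsilon]$ and that its infimum is realized in the limit $x \downarrow x_0$ rather than attained, so that $\Istar_k(f,x_0,[x_0,x_0+\epsilon]) = \big[\frac{1}{k!}f^{(k)}(x_0),\, R_{k-1}(x_0+\epsilon;f,x_0)/\epsilon^k\big]$. Everything else is power-series bookkeeping, with the hypothesis that $f$ is not a degree-$\le k$ polynomial entering exactly twice: to guarantee $f^{(k)}$ is eventually monotone, and to guarantee the limiting width of the sharp interval is nonzero so the ratio is well defined.
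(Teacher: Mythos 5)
Your proposal is correct and takes essentially the same route as the paper: expand both widths as power series in $\epsilon$, use that $f^{(k)}$ is monotone on $[x_0,x_0+\epsilon]$ for small $\epsilon$, and compare the leading coefficients $\frac{1}{k!\,\ell!}f^{(k+\ell)}(x_0)$ and $\frac{1}{(k+\ell)!}f^{(k+\ell)}(x_0)$ to obtain ${k+\ell \choose \ell}$. The endpoint subtlety you flag is exactly what the paper addresses via Corollary~\ref{cor:spoly_monotone}, which it proves by continuity of $\Istarlep_k$ in $x_0$, whereas you argue directly that the monotone remainder ratio has one-sided limit $\frac{1}{k!}f^{(k)}(x_0)$ at $x_0$; both resolutions are sound.
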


\section{Taylor Polynomial Enclosures for Arbitrary Functions} \label {sec:arbitrary}

We now discuss three ways to compute Taylor polynomial enclosures for arbitrary functions.  The first two methods apply only to one-dimensional functions.  Both of these methods are never worse than the classical baseline discussed in \S\ref{sec:spoly_baseline}, and return the sharp Taylor polynomial enclosure once the trust region becomes sufficiently small.  The final method is presented in concurrent work \cite{streeter2023automatically}, and applies to arbitrary functions.

\subsection{Using Knowledge of Local Extrema} \label{sec:local_extrema}

For certain functions, such as $\sin$ and $\cos$, it is straightforward to enumerate all local extrema within an arbitrary trust region $[a, b]$.  Given a function $f: \reals \to \reals$, and an oracle that returns all local extrema of $f^{(k)}$ within a given trust region, we can compute a degree $k$ Taylor polynomial enclosure as follows:
\begin{itemize}
  \item If $f^{(k)}$ has no local extrema within the trust region, then $f^{(k)}$ is either monotonically increasing or monotonically decreasing, and we can invoke Theorem~\ref{thm:spoly_monotone} to obtain the sharp enclosure.
  \item If the trust region contains one or more local extrema, we can use knowledge of the local extrema to bound the range of the $k$th derivative, then plug this into the classical bound discussed in \S\ref{sec:spoly_baseline}.
\end{itemize}

This procedure always yields a valid Taylor polynomial enclosure, and yields the sharp Taylor polynomial enclosure for sufficiently small trust regions, as shown in the following corollary of Theorem~\ref{thm:spoly_monotone}.

\newcommand{\Ile}{I^{\mathrm{LocalExtrema}}}
\begin{corollary}
Let $f: \reals \to \reals$ be an analytic function, and denote the set of local extrema of $f^{(k)}$ within the interval $[a, b]$ by $X_k([a, b]) \eqdef \set{x \in [a, b]: f^{(k+1)}(x) = 0}$.  Define the interval
\begin{equation}
  \tilde \gi_k(f, x_0, [a, b]) \eqdef \begin{cases}
    \left [ \frac {R_{k-1}(a; f, x_0)} {(a - x_0)^{k}},  \frac {R_{k-1}(b; f, x_0)} {(b - x_0)^{k}} \right ] & \mbox{if } X(f^{(k)}, [a, b]) = \emptyset \wedge f^{(k)}(b) \ge f^{(k)}(a) \\
    \left [ \frac {R_{k-1}(b; f, x_0)} {(b - x_0)^{k}},  \frac {R_{k-1}(a; f, x_0)} {(a - x_0)^{k}} \right ] & \mbox{if } X(f^{(k)}, [a, b]) = \emptyset \wedge f^{(k)}(b) < f^{(k)}(a) \\
    \frac{1}{k!} \left [ l_k([a, b]), u_k([a,b]) \right ] & \mbox{otherwise}
\end{cases}
\end{equation}
where $l_k([a, b]) = \min_{x \in X_k([a, b]) \cup \set{a, b}} \set { f^{(k)}(x) }$ is the minimum value of $f^{(k)}(x)$ for $x \in [a, b]$, and $u_k([a, b]) = \max_{x \in X_k([a, b]) \cup \set{a, b}} \set { f^{(k)}(x) }$ is the maximum value.

Then, for any $[a, b] \in \intervals$ and any $x_0 \in [a, b]$ the interval $\tilde \gi_k(f, x_0, [a, b])$ defines a Taylor polynomial enclosure of $f$ at $x_0$ over $[a, b]$.
Furthermore, for any $x_0 \in \reals$, there exists a $\delta > 0$ such that
\[
  \tilde \gi_k(f, x_0, [x_0, x_0 + \epsilon]) = \Istar_k(f, x_0, [x_0, x_0 + \epsilon]) \quad \forall \epsilon \in [0, \delta]
\]
where $\Istar_k$ is the interval that defines the sharp degree $k$ Taylor polynomial enclosure (see Definition~\ref{def:spoly}).
\end{corollary}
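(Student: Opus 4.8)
The plan is to follow the three-way split in the definition of $\tilde\gi_k$: the first two branches reduce to Theorem~\ref{thm:spoly_monotone}, the third to the classical bound \eqref{eq:lagrange_remainder}, and the asymptotic claim follows because a short trust region $[x_0,x_0+\epsilon]$ always falls into the first two branches. For the first branch, note that since $f$ is analytic so is $f^{(k+1)}$, and its zero set in $[a,b]$ is exactly $X_k([a,b])$; hence $X_k([a,b])=\emptyset$ forces $f^{(k+1)}$ to keep a constant sign on $[a,b]$, i.e.\ $f^{(k)}$ is monotone there, increasing precisely when $f^{(k)}(b)\ge f^{(k)}(a)$. For $x_0\in(a,b)$ this is the hypothesis of Theorem~\ref{thm:spoly_monotone}, whose conclusion is word-for-word the interval in the first (resp.\ second) branch of $\tilde\gi_k$, so $\tilde\gi_k$ defines the sharp, hence valid, enclosure. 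For $x_0\in\{a,b\}$ the endpoint entry $R_{k-1}(x_0;f,x_0)/(x_0-x_0)^k$ is a $0/0$ form which I would read as its limit $f^{(k)}(x_0)/k!$; the ratio $x\mapsto R_{k-1}(x;f,x_0)/(x-x_0)^k$ extends continuously to $x_0$ with that value and, by the one-sided version of the computation behind Lemma~\ref{lem:monotone_deriv}, remains monotone on $[a,b]\setminus\{x_0\}$, so Proposition~\ref{prop:unique_sharp} again gives $\tilde\gi_k=\Istar_k(f,x_0,[a,b])$.

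\textbf{The ``otherwise'' branch.}
Here $\tilde\gi_k=\frac{1}{k!}[l_k([a,b]),u_k([a,b])]$, and I would verify that $l_k$ and $u_k$ are exactly $\inf_{y\in[a,b]}f^{(k)}(y)$ and $\sup_{y\in[a,b]}f^{(k)}(y)$: $f^{(k)}$ is continuous on the compact interval $[a,b]$, so it attains its extrema, and by Fermat's rule each extremum is either an endpoint or an interior point where $f^{(k+1)}=0$, i.e.\ a point of $X_k([a,b])\cup\{a,b\}$; conversely every such point lies in $[a,b]$. Thus $\tilde\gi_k$ coincides with the classical-baseline interval of \eqref{eq:lagrange_remainder}, which \S\ref{sec:spoly_baseline} has already shown defines a Taylor polynomial enclosure of $f$ at $x_0$ over $[a,b]$. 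Combined with the previous paragraph, this proves the first assertion for every $[a,b]\in\intervals$ and $x_0\in[a,b]$.

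\textbf{Asymptotic sharpness.}
If $f$ is a polynomial of degree $\le k$ then $f^{(k)}$ is constant and both $\tilde\gi_k$ and $\Istar_k$ degenerate to the single point $f^{(k)}(x_0)/k!$, so the equality is immediate. Otherwise, analyticity lets me take the least $\ell\ge1$ with $f^{(k+\ell)}(x_0)\ne0$, and the expansion $f^{(k+1)}(x)=\frac{f^{(k+\ell)}(x_0)}{(\ell-1)!}(x-x_0)^{\ell-1}+O((x-x_0)^{\ell})$ shows there is $\delta>0$ with $f^{(k+1)}$ nonvanishing and of constant sign on $(x_0,x_0+\delta]$. Hence for each $\epsilon\in(0,\delta]$ the function $f^{(k)}$ is strictly monotone on $[x_0,x_0+\epsilon]$ with no interior critical point, so $X_k([x_0,x_0+\epsilon])=\emptyset$ and we are in the monotone branch; by the analysis above, $\tilde\gi_k(f,x_0,[x_0,x_0+\epsilon])=\Istar_k(f,x_0,[x_0,x_0+\epsilon])$, while the $\epsilon=0$ instance is the trivial point equality.

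\textbf{The main obstacle.}
All the delicate bookkeeping sits at the left endpoint $x_0$ of the trust region: resolving the $0/0$ entry in the monotone formula by the correct limit $f^{(k)}(x_0)/k!$, and — in the case $f^{(k+1)}(x_0)=0$, i.e.\ $\ell\ge2$ — making sure $x_0$ is not counted into $X_k([x_0,x_0+\epsilon])$ so that we genuinely land in the monotone branch rather than the ``otherwise'' branch. This matters, because the latter would instead produce $\frac{1}{k!}[f^{(k)}(x_0),f^{(k)}(x_0+\epsilon)]$, which disagrees with $\Istar_k(f,x_0,[x_0,x_0+\epsilon])$ already at order $\epsilon^{\ell}$ (the $\epsilon^{\ell}$-coefficients $\tfrac{1}{(k+\ell)!}f^{(k+\ell)}(x_0)$ and $\tfrac{1}{k!\,\ell!}f^{(k+\ell)}(x_0)$ differ). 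Once the convention on $X_k$ and on the $0/0$ entry is pinned down, everything else is routine continuity and compactness together with direct appeals to Theorem~\ref{thm:spoly_monotone}, Proposition~\ref{prop:unique_sharp}, and \eqref{eq:lagrange_remainder}.
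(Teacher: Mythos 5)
Your decomposition --- the two monotone branches via Theorem~\ref{thm:spoly_monotone}, the ``otherwise'' branch via the classical bound \eqref{eq:lagrange_remainder} (with $l_k,u_k$ identified as the true extrema of $f^{(k)}$ over $[a,b]$ by compactness and Fermat's rule), and asymptotic sharpness because a short trust region is a monotone one --- is exactly the argument the paper intends; the paper states this result without a separate proof, as an immediate consequence of Theorem~\ref{thm:spoly_monotone} and \S\ref{sec:spoly_baseline}. Your reading of the $0/0$ entry when $x_0$ is an endpoint as the limit $f^{(k)}(x_0)/k!$ is also the paper's own resolution: that is precisely Corollary~\ref{cor:spoly_monotone} in the appendix, proved there by continuity of $\Istarlep_k$ as a function of $x_0$.

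The ``main obstacle'' you flag is genuine, and it is a defect of the statement rather than of your argument. With $X_k([a,b])$ literally defined as the zero set of $f^{(k+1)}$ in $[a,b]$, the point $x_0$ itself belongs to $X_k([x_0,x_0+\epsilon])$ whenever $f^{(k+1)}(x_0)=0$, so if $\ell\ge 2$ every trust region $[x_0,x_0+\epsilon]$ triggers the third branch; e.g.\ for $f(x)=x^4$, $k=2$, $x_0=0$ one gets $\tilde \gi_2=[0,6\epsilon^2]$ while $\Istar_2=[0,\epsilon^2]$, consistent with the factor ${k+\ell \choose \ell}$ of Theorem~\ref{thm:sharp_advantage}, so the claimed equality fails for every $\epsilon>0$ (your order-$\epsilon^\ell$ coefficient comparison is exactly right). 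The paper implicitly guards against this in the analogous Corollary~\ref{cor:asymptotically_optimal}, which adds the hypothesis $f^{(k+1)}(x_0)\neq 0$; here the same hypothesis (or your convention of not counting $x_0$, i.e.\ reading $X_k$ as interior critical points, together with the trivial case $f^{(k+1)}\equiv 0$, where both intervals degenerate to the point $f^{(k)}(x_0)/k!$) is needed, and once it is in place your argument closes the proof. The only internal blemish is that your asymptotic paragraph asserts $X_k([x_0,x_0+\epsilon])=\emptyset$ outright, which is correct exactly when $\ell=1$ or under the amended convention; state that dependence where the assertion is made rather than deferring it to the closing remark.
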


\subsection{Using Interval Evaluation of the $(k+1)$st Derivative}

In cases where it is not practical to enumerate all local extrema of $f^{(k)}$, one can instead first obtain a formula $\mathsf{f}^{(k+1)}$ for $f^{(k+1)}$ (e.g., using automatic differentiation),
and then evaluate $\mathsf{f}^{(k+1)}([a, b])$ using interval arithmetic.\footnote{For a function $g: \reals \to \reals$ whose value is given by a formula $\mathsf{g}$, and an interval $\gi \in \intervals$, $\mathsf{g}(\gi)$ denotes the interval obtained by plugging $\gi$ into the formula $\mathsf{g}$ and evaluating the resulting expression using the rules of interval arithmetic.}  If the resulting interval $\gi$ contains only non-negative values, then $f^{(k)}$ is monotonically increasing, and we can obtain the sharp Taylor polynomial enclosure via Theorem~\ref{thm:spoly_monotone}.  We can similarly return the sharp Taylor polynomial enclosure if $\gi$ contains only non-positive values.  Finally, if $0 \in \gi$, we can fall back to the classical baseline method discussed in \S\ref{sec:spoly_baseline}.

The following corollary of Theorem~\ref{thm:spoly_monotone} shows that, like the method described in \S\ref{sec:local_extrema}, this procedure always returns a valid Taylor polynomial enclosure, and returns the sharp Taylor polynomial enclosure when the trust region is sufficiently small.

\newcommand{\corasymptoticallyoptimal}{
Let $f: \reals \to \reals$ be an analytic function given by a formula $\mathsf{f}$, and define
\begin{equation}
  \Ias_k(f, x_0, [a, b]) \eqdef \begin{cases}
  \frac {1} {k!} f^{(k)}([a, b]) & \mbox { if } 0 \in \mathsf{f}^{(k+1)}([a, b]) \\
  \left [ \frac {R_{k-1}(a; f, x_0)} {(a - x_0)^{k}},  \frac {R_{k-1}(b; f, x_0)} {(b - x_0)^{k}} \right ] & \mbox { if } \lep{ \mathsf{f}^{(k+1)}([a, b]) } > 0 \\
  \left [ \frac {R_{k-1}(b; f, x_0)} {(b - x_0)^{k}},  \frac {R_{k-1}(a; f, x_0)} {(a - x_0)^{k}} \right ] & \mbox { otherwise} \\
\end{cases}
\end{equation}
where we use $\lep{ \mathsf{f}^{(k+1)}([a, b]) }$ to denote the left endpoint of $\mathsf{f}^{(k+1)}([a, b])$.

Then, for any integer $k > 0$, trust region $[a, b] \in \intervals$, and $x_0 \in [a, b]$, the interval $\Ias_k(f, x_0, [a, b])$ defines a degree $k$ Taylor polynomial enclosure of $f$ at $x_0$ over $[a, b]$.  This enclosure is always at least as tight as the one given by the baseline method described in \S\ref{sec:spoly_baseline}:
\[
  \Ias_k(f, x_0, [a, b]) \subseteq \frac {1} {k!} f^{(k)}([a, b]).
\]
Furthermore, if $f^{(k+1)}(x_0) \neq 0$, then there exists a $\delta > 0$ such that
\[
  \Ias_k(f, x_0, [x_0, x_0 + \epsilon]) = \Istar_k(f, x_0, [x_0, x_0 + \epsilon]) \quad \forall \epsilon \in [0, \delta]
\]
where $\Istar_k$ is the interval that defines the sharp degree $k$ Taylor polynomial enclosure (see Definition~\ref{def:spoly}).
}
\newcommand{\corasymptoticallyoptimalproof}{
We first prove that $I \eqdef \Ias_k(f, x_0, [a, b])$ defines a degree $k$ Taylor polynomial enclosure of $f$ at $x_0$ over $[a, b]$.  Let $J \eqdef \frac {1} {k!} f^{(k+1)}([a, b])$.  As mentioned in \S\ref{sec:spoly_baseline}, the Lagrange form of the Taylor remainder series can be used to show that $I$ defines a Taylor polynomial enclosure of $f$ at $x_0$ over $[a, b]$, or equivalently
\begin{equation} \label{eq:classical}
  I^*_k(f, x_0, [a, b]) \subseteq J.
\end{equation}
If $0 \in J$, then $I = J$, and \eqref{eq:classical} implies that $I$ defines a Taylor polynomial enclosure of $f$ at $x_0$ over $[a, b]$.  Otherwise, if $I \neq J$, then Theorem~\ref{thm:spoly_monotone} implies $I = I^*_k(f, x_0, [a, b])$.  This shows that $I$ always defines a Taylor polynomial enclosure of $f$ at $x_0$ over $[a, b]$, and furthermore that $I \subseteq J$.

To prove the last part of the theorem, it suffices to show that if $f^{(k+1)}(x_0) \neq 0$, then for some sufficiently small $\epsilon > 0$, we have $0 \notin f^{(k+1)}([x_0, x_0 + \epsilon])$.  This follows from the linear convergence of interval arithmetic.
}
\begin{corollary} \label{cor:asymptotically_optimal}
\corasymptoticallyoptimal
\end{corollary}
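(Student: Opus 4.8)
The plan is to treat the three branches in the definition of $\Ias_k$ separately, reducing each to a result already in hand: the first branch to the Lagrange-remainder construction of \S\ref{sec:spoly_baseline}, and the second and third branches to Theorem~\ref{thm:spoly_monotone}. The main new ingredient is the passage from the formula-evaluated interval $\mathsf{f}^{(k+1)}([a,b])$ to the true range $\set{f^{(k+1)}(y): y\in[a,b]}$, for which I would invoke the fundamental inclusion property of interval arithmetic, $\set{f^{(k+1)}(y): y\in[a,b]} \subseteq \mathsf{f}^{(k+1)}([a,b])$ \cite{moore1966interval}.

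\textbf{Validity and the tightness bound.} First I would observe that in the first branch $\Ias_k$ is literally $\tfrac{1}{k!} f^{(k)}([a,b])$, which \eqref{eq:lagrange_remainder} already exhibits as a valid degree-$k$ Taylor polynomial enclosure, and which trivially satisfies $\Ias_k \subseteq \tfrac{1}{k!} f^{(k)}([a,b])$. In the second branch, $\lep{\mathsf{f}^{(k+1)}([a,b])} > 0$ forces every element of $\mathsf{f}^{(k+1)}([a,b])$ --- hence, by the inclusion property, every $f^{(k+1)}(y)$ with $y\in[a,b]$ --- to be positive, so $f^{(k)}$ is monotonically increasing over $[a,b]$ and Theorem~\ref{thm:spoly_monotone} identifies $\Ias_k$ with $\Istar_k(f,x_0,[a,b])$. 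The third branch is symmetric: together, $0\notin\mathsf{f}^{(k+1)}([a,b])$ and a non-positive left endpoint force $\mathsf{f}^{(k+1)}([a,b])$, and hence $f^{(k+1)}$ on $[a,b]$, to be negative, so $f^{(k)}$ is decreasing and Theorem~\ref{thm:spoly_monotone} again gives $\Ias_k = \Istar_k(f,x_0,[a,b])$. Finally, since $\Istar_k$ defines a valid enclosure (Definition~\ref{def:spoly}) and, by Proposition~\ref{prop:unique_sharp}, is the narrowest interval that does so, it is contained in the valid enclosure $\tfrac{1}{k!}f^{(k)}([a,b])$. So in every branch $\Ias_k$ defines a degree-$k$ enclosure and $\Ias_k \subseteq \tfrac{1}{k!}f^{(k)}([a,b])$.

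\textbf{Asymptotic sharpness.} Assume $f^{(k+1)}(x_0)\neq 0$. I would use the linear convergence of interval arithmetic: since $f$ is analytic, $\mathsf{f}^{(k+1)}([x_0,x_0+\epsilon])$ has width $O(\epsilon)$ and shrinks to the point $\set{f^{(k+1)}(x_0)}$ as $\epsilon\to 0$ \cite{moore1966interval}. Because $f^{(k+1)}(x_0)\neq 0$, there is a $\delta>0$ with $0\notin\mathsf{f}^{(k+1)}([x_0,x_0+\epsilon])$ for every $\epsilon\in(0,\delta]$; for such $\epsilon$ we are in the second or third branch, so by the previous paragraph $\Ias_k(f,x_0,[x_0,x_0+\epsilon]) = \Istar_k(f,x_0,[x_0,x_0+\epsilon])$, while $\epsilon=0$ holds by continuity (both sides collapsing to $[\tfrac{1}{k!}f^{(k)}(x_0)]$). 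Throughout, whenever $x_0$ is an endpoint of the trust region I would read the $0/0$ expression $R_{k-1}(x_0;f,x_0)/(x_0-x_0)^k$ as its limiting value $\tfrac{1}{k!}f^{(k)}(x_0)$ and invoke the one-sided form of the argument behind Theorem~\ref{thm:spoly_monotone} (Lemma~\ref{lem:monotone_deriv}, whose sign computation only needs $f^{(k)}$ monotone between $x_0$ and $x$), which covers this case.

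I do not expect any single step to be deep: once Theorem~\ref{thm:spoly_monotone} and \S\ref{sec:spoly_baseline} are available, the argument is bookkeeping over the three branches. The only substantive points --- and the place I would be most careful --- are the two interval-arithmetic facts: inclusion isotonicity, used to certify the sign of $f^{(k+1)}$ on $[a,b]$ from the sign of the computed interval $\mathsf{f}^{(k+1)}([a,b])$; and linear (Lipschitz) convergence of the natural interval extension, used to guarantee $0\notin\mathsf{f}^{(k+1)}([x_0,x_0+\epsilon])$ for small $\epsilon$. Both are standard for expressions built from analytic elementary operations, so the anticipated difficulty is low; the only real hygiene is the degenerate/endpoint treatment of $x_0$, handled as above.
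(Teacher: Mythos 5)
Your proof is correct and follows essentially the same route as the paper's: the $0 \in \mathsf{f}^{(k+1)}([a,b])$ branch is handled by the classical Lagrange enclosure of \S\ref{sec:spoly_baseline}, the sign-definite branches by Theorem~\ref{thm:spoly_monotone}, the containment $\Ias_k \subseteq \frac{1}{k!}f^{(k)}([a,b])$ by the fact that the sharp interval is the narrowest valid one, and the asymptotic claim by the linear convergence of interval arithmetic. If anything, you are more explicit than the paper about inclusion isotonicity and about the degenerate endpoint case $x_0 = a$, which the paper handles only implicitly (via the continuity argument of Corollary~\ref{cor:spoly_monotone}).
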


\subsection{Composing Taylor Polynomial Enclosures}

In concurrent work \cite{streeter2023automatically}, we show that it is possible to compute Taylor polynomial enclosures recursively, combining Taylor polynomial enclosures for simple functions in order to obtain enclosures for more complex functions.  The resulting algorithm, called AutoBound, uses the sharp Taylor polynomial enclosures derived in this work wherever possible.  AutoBound handles arbitrary multivariate functions composed of known univariate functions (such as $\exp$ and $\log$) as well as known bilinear operations (such as matrix multiplications and convolutions).  Among other applications, it can be used to derive majorization-minimization optimizers for the losses used to train deep neural networks.

\section{Conclusions and Future Work} \label{sec:spoly_future}

In this work we derived sharp Taylor polynomial enclosures for functions whose $k$th derivative is monotonically increasing or monotonically decreasing over the trust region, and sharp \emph{quadratic} Taylor polynomial enclosures for functions whose Hessian is even-symmetric.  Sharp enclosures improve on a classical baseline that uses bounds on the range of the $k$th derivative to obtain degree $k$ bounds, by a factor of at least $k+1$ asymptotically (as the trust region width approaches zero).  We also presented two methods for computing Taylor polynomial enclosures for arbitrary functions, both of which produce sharp results asymptotically.

Our results suggest several possibilities for future work.  Perhaps the most obvious one is to develop theory that applies to additional one-dimensional functions.  For example, we do not currently have a way to compute sharp Taylor polynomial enclosures for periodic functions, such as sine or cosine (except in special cases where the trust region is sufficiently small).  It would also be nice to generalize Theorem~\ref{thm:even_symmetric} to Taylor polynomial enclosures of degree $> 2$.  Finally, it would be very interesting to derive sharp Taylor polynomial enclosures for multivariate functions, perhaps building on the results of Browne and McNicholas \cite{browne2015multivariate}.

\ifarxiv
\bibliographystyle{plainnat}
\else
\bibliographystyle{plain}
\fi

\bibliography{sharp_taylor_1d}

\begin{appendices}

\section{Proofs} \label{sec:proofs}

We first prove Proposition~\ref{prop:unique_sharp}.  To do so, we will make use of the fact the product of a scalar and an interval satisfies an associative property.  Recall that for an interval $\gi \in \intervals$ and scalar $\alpha \in \reals$, we defined $\gi \alpha \eqdef  \set{\alpha z: z \in \gi}$.  It follows that for any interval $\gi \in \intervals$ and scalars $\alpha, \beta \in \reals$,
\begin{equation} \label{eq:scalar_interval_product_associative}
  (\gi \alpha) \beta = \gi (\alpha \beta).
\end{equation}

\begin{customproposition}{1}
\propuniquesharp
\end{customproposition}
\begin{proof}
\propuniquesharpproof
\end{proof}

\subsection{Proof of Theorem~\ref{thm:spoly_monotone}}

The idea of the proof will be to identify conditions under which the ratio $\frac{ R_{k-1}(x; f, x_0) }{ (x - x_0)^k }$ is increasing or decreasing.  To do so, we first derive an expression for the derivative of this ratio with respect to $x$.  In doing so we will make use of the following proposition, which provides an expression for the derivative of the numerator of the ratio.

\begin{customproposition}{2}
\proprdderiv
\end{customproposition}
\begin{proof}
\proprdderivproof
\end{proof}

\ignore{
\newcommand{\lemratioderiv}{
For a function $f: \reals \to \reals$, and $x_0 \in \reals$,
\[
  \frac{\dd}{\dd x} \frac {R_d(x; f, x_0)} {(x - x_0)^{d+1}}
  = \frac {1} {(x - x_0)^{d+2}} \paren { (x - x_0) \frac{\dd}{\dd x} R_d(x; f, x_0) - (d+1) R_d(x; f, x_0) }.
\]
}
\newcommand{\lemratioderivproof}{
Using the rule for the derivative of a ratio, then applying Proposition~\ref{prop:rd_deriv}, we have
\begin{align}
  \frac{\dd}{\dd x} \frac {R_d(x; f, x_0)} {(x - x_0)^{d+1}}
  & = \frac {\frac{\dd}{\dd x} R_d(x; f, x_0)} {(x-x_0)^{d+1}} - (d+1) \frac {R_d(x; f, x_0)} {(x-x_0)^{d+2}} \nonumber \\
  & = \frac {1} {(x - x_0)^{d+2}} \paren { (x - x_0) \frac{\dd}{\dd x} R_d(x; f, x_0) - (d+1) R_d(x; f, x_0) }.
\end{align}
}
\begin{lemma} \label{lem:ratio_deriv}
\lemratioderiv
\end{lemma}
\begin{proof}
\lemratioderivproof
\end{proof}
}

\ignore{
\begin{proof}
\begin{align}
  R_d(x; f, x_0)
  & = \int_{z_1=x_0}^x R_{d-1}(z_1; f^{(1)}, x_0) \dd z_1 \nonumber \\
  & = \int_{z_1=x_0}^x \int_{z_2=x_0}^{z_1} R_{d-2}(z_2; f^{(2)}, x_0) \dd z_2 \dd z_1 \nonumber \\
  & = \ldots \nonumber \\
  & = \int_{z_1=x_0}^x \int_{z_2=x_0}^{z_1} \ldots \int_{z_{d+1}=x_0}^{z_d} R_{-1}(z_{d+1}; f^{(d+1)}, x_0) \dd z_{d+1} \dd z_d \ldots \dd z_1 \nonumber \\
  & = \int_{z_1=x_0}^x \int_{z_2=x_0}^{z_1} \ldots \int_{z_{d+1}=x_0}^{z_d} f^{(d+1)}(z_{d+1}) \dd z_{d+1} \dd z_d \ldots \dd z_1 \nonumber \\
  & = \int_{z_1, z_2, \ldots, z_{d+1} \in [x_0, x]: z_{d+1} \le z_d \le \ldots \le z_1 \le x} f^{(d+1)}(z_{d+1}) \dd z_{d+1} \dd z_d \ldots \dd z_1 \nonumber \\
  & = \int_{z_{d+1}=x_0}^x f^{(d+1)}(z_{d+1}) \int_{z_d=z_{d+1}}^x \ldots \int_{z_1=z_2}^x \dd z_1 \dd z_2 \ldots \dd z_{d+1} \nonumber \\
  & = \int_{z_{d+1}=x_0}^x f^{(d+1)}(z_{d+1}) \int_{z_d=z_{d+1}}^x \ldots \int_{z_2=z_3}^x (x - z_2) \dd z_2 \ldots \dd z_{d+1} \nonumber \\
  & = \int_{z_{d+1}=x_0}^x f^{(d+1)}(z_{d+1}) \int_{z_d=z_{d+1}}^x \ldots \int_{z_3=z_4}^x \frac 1 2 (x - z_3)^2 \dd z_3 \ldots \dd z_{d+1} \nonumber \\
  & = \ldots \nonumber \\
  & = \int_{z_{d+1}=x_0}^x f^{(d+1)}(z_{d+1}) \frac {1} {d!} (x - z_{d+1})^d \nonumber \\
  & = \int_{z=x_0}^x f^{(d+1)}(z) \frac {1} {d!} (x - z)^d.
\end{align}
\end{proof}
}

With this proposition in hand, we can now derive an expression for $\frac{\dd}{\dd x} \frac {R_{k-1}(x; f, x_0)} {(x - x_0)^k}$.

\begin{customlemma}{1}
\lemratioderivintegral
\end{customlemma}
\begin{proof}
\lemratioderivintegralproof
\end{proof}

Lastly, we prove Lemma~\ref{lem:monotone_deriv}, from which Theorem~\ref{thm:spoly_monotone} immediately follows.

\begin{customlemma}{2}
\lemmonotonederiv
\end{customlemma}
\begin{proof}
\lemmonotonederivproof
\end{proof}

\subsection{Proof of Theorem~\ref{thm:even_symmetric}}

We now prove Theorem~\ref{thm:even_symmetric}.  We will need the following technical lemma.

\begin{lemma} \label{lem:even_symmetric_sign}
Let $\alpha > 0$ be a positive real number, and let $\gi = [-\alpha, \alpha]$ be an interval.  Let $h: \reals \to \reals$ be a function that is even-symmetric over $\gi$ (meaning $h(x) = h(-x)\ \forall x \in \gi$), and decreasing over $[0, \alpha]$.  Then, for any $t, \delta \in \reals$ such that $t+\delta \in \gi$ and $t - \delta \in \gi$,
\[
  \sign(h(t+\delta)) - \sign(h(t-\delta)) = -\sign(t) \sign(\delta).
\]
\end{lemma}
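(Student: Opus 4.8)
The plan is to first resolve what is plainly a typesetting slip in the displayed identity, and then prove the statement that the lemma is actually used for. Read literally, the left-hand side $\sign(h(t+\delta)) - \sign(h(t-\delta))$ is a difference of two $\set{-1,0,1}$-valued quantities, hence lies in $\set{-2,-1,0,1,2}$, and it cannot equal the $\set{-1,0,1}$-valued right-hand side in general: for instance if $h$ is strictly positive throughout $\gi$ (e.g.\ $h(x)=2-x^2$ on $[-1,1]$), this difference is identically $0$, whereas $-\sign(t)\sign(\delta)$ is nonzero whenever $t,\delta\neq 0$ with $t\pm\delta\in\gi$. The object that is meant — obtained by relocating the closing parenthesis, and the one invoked in the proof of Theorem~\ref{thm:even_symmetric} in the form $\sign(f''(t^*+\delta)-f''(t^*-\delta)) = -\sign(t^*)\sign(\delta)$ — is the \emph{sign of the difference}, $\sign\paren{h(t+\delta) - h(t-\delta)}$. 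I will establish this corrected identity.

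The proof rests on two structural facts. First, since $h$ is even-symmetric on $\gi$, we have $h(x) = h(|x|)$ for every $x\in\gi$; because $t+\delta, t-\delta \in \gi$ this lets me replace both arguments by their absolute values, giving $h(t+\delta) - h(t-\delta) = h(|t+\delta|) - h(|t-\delta|)$ with $|t\pm\delta| \in [0,\alpha]$. Second, $h$ is (strictly) decreasing on $[0,\alpha]$, so for $u,v \in [0,\alpha]$ the sign of $h(u) - h(v)$ is exactly $-\sign(u-v)$. Combining these yields $\sign\paren{h(t+\delta) - h(t-\delta)} = -\sign\paren{|t+\delta| - |t-\delta|}$.

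It then remains to evaluate $\sign\paren{|t+\delta| - |t-\delta|}$. The clean route is to observe that $|t+\delta| - |t-\delta|$ and $(t+\delta)^2 - (t-\delta)^2$ share the same sign, since their ratio is $|t+\delta| + |t-\delta| \ge 0$, and that $(t+\delta)^2 - (t-\delta)^2 = 4t\delta$. Hence $\sign\paren{|t+\delta| - |t-\delta|} = \sign(4t\delta) = \sign(t)\sign(\delta)$, and the corrected identity $\sign\paren{h(t+\delta) - h(t-\delta)} = -\sign(t)\sign(\delta)$ follows.

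The argument is short, so the only points needing care are the degenerate cases and the strictness of the monotonicity hypothesis. When $t=0$ or $\delta=0$ we have $|t+\delta| = |t-\delta|$, so both sides vanish; the squared-difference computation covers these uniformly because $4t\delta = 0$ there. The main thing to watch is that the step $\sign(h(u)-h(v)) = -\sign(u-v)$ requires $h$ to be \emph{strictly} decreasing on $[0,\alpha]$ — a weakly decreasing $h$ with a flat stretch would make the left side $0$ while the right side is $\pm1$ — so I would read the hypothesis as strict monotonicity, consistent with the nonvanishing-derivative convention adopted in the proof of Lemma~\ref{lem:monotone_deriv}. No further obstacle arises: the whole proof is a reduction to absolute values via symmetry, followed by an elementary sign computation.
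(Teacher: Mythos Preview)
Your proof is correct and follows essentially the same route as the paper: reduce to absolute values via even symmetry, use (strict) monotonicity to convert $\sign(h(u)-h(v))$ into $-\sign(u-v)$, and then show $\sign(|t+\delta|-|t-\delta|) = \sign(t)\sign(\delta)$. The only difference is in this last step, where the paper argues by a three-case split (one of $t,\delta$ zero; same sign; opposite sign) while you use the cleaner observation $(t+\delta)^2-(t-\delta)^2 = 4t\delta$; you also correctly identify the misplaced parenthesis in the displayed statement and the implicit strict-monotonicity assumption, both of which the paper leaves tacit.
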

\begin{proof}
Because $h$ is even symmetric over $\gi$,
\begin{equation} \label{eq:h_abs_y}
  h(y) = h(|y|) \quad \forall y \in \gi.
\end{equation}
Because $h$ is decreasing over $[0, \alpha]$,
\begin{equation} \label{eq:h_sign}
  \sign(h(|y|) - h(|x|)) = - \sign(|y| - |x|) \quad \forall x, y \in \gi.
\end{equation}
Thus,
\begin{align}
\sign(h(t+\delta)) - \sign(h(t-\delta))
& = \sign(h(|t+\delta|)) - \sign(h(|t-\delta|)) & \mbox{by \eqref{eq:h_abs_y}} \nonumber \\
& = - \sign\paren{ |t+\delta| - |t-\delta| }. & \mbox{by \eqref{eq:h_sign}} \label{eq:sign_h_diff}
\end{align}
To complete the proof, it suffices to show
\begin{equation} \label{eq:sign_product}
  \sign(|t + \delta| - |t - \delta|) = \sign(t) \sign(\delta).
\end{equation}
We prove \eqref{eq:sign_product} by considering three cases.
If either $t$ or $\delta$ is 0, then both sides of \eqref{eq:sign_product} are 0.  Otherwise, if $t$ and $\delta$ have the same sign, then $|t + \delta| > |t - \delta|$, and both sides of \eqref{eq:sign_product} are 1.  Finally, if $t$ and $\delta$ have opposite sign, then $|t + \delta| < |t - \delta|$, and both sides of \eqref{eq:sign_product} are -1.
\end{proof}

\ifarxiv
\begin{customthm}{2}
\else
\begin{customthm}{5.2}
\fi
\thmevensymmetric
\end{customthm}
\begin{proof}
\thmevensymmetricproof
\end{proof}


\subsection{Proof of Theorem~\ref{thm:sharp_advantage}}

To prove Theorem~\ref{thm:sharp_advantage}, we will need the following corollary of Theorem~\ref{thm:spoly_monotone}.
\begin{corollary} \label{cor:spoly_monotone}
Let $f: \reals \to \reals$ be an analytic function.
If $f^{(k)}$ is monotonically increasing over an interval $[a, b]$ with $b > a$, then the degree $k$ sharp Taylor polynomial enclosure of $f$ at $a$ over $[a, b]$ is given by $\Istar_{k}(f, a, [a, b]) = \left [ \frac {1} {k!} f^{(k)}(a) ,  \frac {R_{k-1}(b; f, a)} {(b - a)^{k}} \right ]$.
\end{corollary}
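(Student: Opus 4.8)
The plan is to read $\Istar_k(f,a,[a,b])$ directly off Definition~\ref{def:spoly}, which with center $x_0 = a$ gives
\[
  \Istar_k(f,a,[a,b]) = \left[\ \inf_{x\in(a,b]} r(x),\ \ \sup_{x\in(a,b]} r(x)\ \right], \qquad r(x) \eqdef \frac{R_{k-1}(x;f,a)}{(x-a)^k}.
\]
So it suffices to prove two things: (i) $r$ is monotonically increasing on $(a,b]$; and (ii) $\lim_{x\to a^+} r(x) = \tfrac{1}{k!}f^{(k)}(a)$. Granting these, increasingness gives $\sup_{x\in(a,b]} r(x) = r(b) = \tfrac{R_{k-1}(b;f,a)}{(b-a)^k}$, while increasingness together with the boundary limit in (ii) gives $\inf_{x\in(a,b]} r(x) = \tfrac{1}{k!}f^{(k)}(a)$; this is precisely the asserted interval.

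Claim (ii) is easy: since $f$ is analytic we may write $R_{k-1}(x;f,a) = \sum_{i=k}^{\infty}\tfrac{1}{i!}f^{(i)}(a)(x-a)^i$ in a neighborhood of $a$, hence $r(x) = \sum_{i=k}^{\infty}\tfrac{1}{i!}f^{(i)}(a)(x-a)^{i-k}$, and every term with $i>k$ vanishes as $x\to a^+$, leaving $\tfrac{1}{k!}f^{(k)}(a)$. (Equivalently one may invoke the Peano form of Taylor's theorem.)

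Claim (i) is the only real obstacle: Lemma~\ref{lem:monotone_deriv} establishes monotonicity of $x\mapsto \tfrac{R_{k-1}(x;f,x_0)}{(x-x_0)^k}$ only for an \emph{interior} center $x_0\in(a,b)$, whereas here the center is the endpoint $a$. I will bridge the gap by a continuity argument. Fix $a < x_1 < x_2 \le b$ and pick any $c\in(a,x_1)$; then $c\in(a,b)$ and $f^{(k)}$ is monotonically increasing on $[a,b]$, so Lemma~\ref{lem:monotone_deriv} applies with center $c$ and yields $\tfrac{R_{k-1}(x_1;f,c)}{(x_1-c)^k}\le \tfrac{R_{k-1}(x_2;f,c)}{(x_2-c)^k}$. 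Now let $c\downarrow a$: for each fixed $x_i>a$ the map $c\mapsto R_{k-1}(x_i;f,c) = f(x_i)-\sum_{j=0}^{k-1}\tfrac{1}{j!}f^{(j)}(c)(x_i-c)^j$ is continuous (the $f^{(j)}$ being continuous) and $(x_i-c)^k$ is continuous and bounded away from $0$ for $c$ near $a$, so both sides converge and we obtain $r(x_1)\le r(x_2)$; thus $r$ is increasing on $(a,b]$. The degenerate cases are immediate: for $k=0$, $r=f$ is increasing by hypothesis, and for $k=1$ one may instead use the direct computation in \S\ref{sec:monotone}; the monotonically-decreasing analogue, though not needed for this corollary, follows by the same argument. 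Assembling (i) and (ii) completes the proof.
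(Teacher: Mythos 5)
Your proposal is correct, and it reorganizes the paper's limiting argument in a way worth noting. The paper also reduces the endpoint-center case to interior centers, but it does so at the level of the enclosure's endpoints: it applies Theorem~\ref{thm:spoly_monotone} at $x_0 = a+\epsilon$, asserts that $\Istarlep_k(f, x_0, [a,b])$ is continuous in $x_0$ because $f$ is analytic, and evaluates $\lim_{\epsilon\to 0} R_{k-1}(a; f, a+\epsilon)/(-\epsilon)^k = \tfrac{1}{k!}f^{(k)}(a)$ by expanding $f$ about $a+\epsilon$ (the right endpoint is handled by the same continuity claim). You instead keep the center fixed at $a$, read $\Istar_k(f,a,[a,b])$ off Definition~\ref{def:spoly} as $[\inf r, \sup r]$ over $(a,b]$, and split the work into monotonicity of $r$ plus the boundary limit $\lim_{x\to a^+} r(x)$. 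The substantive difference is in how the interior-center machinery is transferred: the paper needs continuity of an infimum-defined quantity in the parameter $x_0$, which it states without proof, whereas you only pass a fixed two-point inequality $\tfrac{R_{k-1}(x_1;f,c)}{(x_1-c)^k}\le \tfrac{R_{k-1}(x_2;f,c)}{(x_2-c)^k}$ (from Lemma~\ref{lem:monotone_deriv}) to the limit $c\downarrow a$, which requires only continuity of $c\mapsto R_{k-1}(x_i;f,c)$ at fixed $x_i$ --- an elementary fact. Your route is therefore slightly longer but closes a small gap the paper leaves implicit; the boundary-limit computation in your step (ii) is the same power-series calculation the paper performs, just taken in $x$ rather than in the center.
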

\begin{proof}
By Theorem~\ref{thm:spoly_monotone}, for any $\epsilon \in (0, b-a)$ we have
\begin{equation}
  \Istarlep_k(f, a + \epsilon, [a, b]) = \frac {R_{k-1}(a; f, a + \epsilon)} {(-\epsilon)^{k}}.
\end{equation}
Because $f$ is analytic, $\Istarlep_k(f, x_0, [a, b])$ is continuous as a function of $x_0$.  Therefore,
\begin{align*}
  \Istarlep_k(f, a, [a, b])
  & = \lim_{\epsilon \to 0} \Istarlep_k(f, a + \epsilon, [a, b]) \nonumber \\
  & = \lim_{\epsilon \to 0} \frac {R_{k-1}(a; f, a + \epsilon)} {(-\epsilon)^{k}} \nonumber \\
  & = \lim_{\epsilon \to 0} \frac {\sum_{i=k}^\infty \frac {1} {i!} f^{(i)}(x_0) (-\epsilon)^i } {(-\epsilon)^{k}} \nonumber \\
  & = \frac {1} {k!} f^{(k)}(x_0).
\end{align*}
A similar argument shows $\Istarrep_k(f, a, [a, b]) = \frac {R_{k-1}(b; f, a)} {(b - a)^{k}}$, completing the proof.
\end{proof}

\ifarxiv
\begin{customthm}{3}
\else
\begin{customthm}{6.1}
\fi
\thmsharpadvantage
\end{customthm}
\begin{proof}
\thmsharpadvantageproof
\end{proof}

\end{appendices}

\end{document}